\numberwithin{equation}{section}
\theoremstyle{definition}
\newtheorem{ntn}{Notation}[section]
\theoremstyle{plain}
\newtheorem{lem}[ntn]{Lemma}
\newtheorem{prp}[ntn]{Proposition}
\newtheorem{thm}[ntn]{Theorem}
\newtheorem{cor}[ntn]{Corollary}
\newtheorem{conj}[ntn]{Conjecture}
\theoremstyle{definition}
\newtheorem{rem}[ntn]{Remark}
\newcommand{\z}{\mathbb{Z}}
\newcommand{\q}{\mathbb{Q}}
\newcommand{\R}{\mathbb{R}}
\newcommand{\kk}{\mathbbm{k}}
\newcommand{\K}{\mathcal{K}}
\newcommand{\EE}{\mathcal{E}}
\newcommand{\DD}{\mathcal{D}}
\newcommand{\B}{\mathcal{B}}
\newcommand{\lan}{\langle}
\newcommand{\ran}{\rangle}
\newcommand{\GL}{\mathrm{GL}}
\newcommand{\SL}{\mathrm{SL}}
\newcommand{\GM}{\mathrm{GM}}
\newcommand{\inc}{{\rm inc}}
\newcommand{\id}{{\rm id}}
\newcommand{\tors}{{{\rm Tor}_1^{\z}}}
\newcommand{\zzz}{\z[\frac{1}{2}]}
\newcommand{\half}{{\Big[\frac{1}{2}\Big]}}
\newcommand{\sixth}{{\Big[\frac{1}{6}\Big]}}
\newcommand{\arr}{\rightarrow}
\newcommand{\larr}{\longrightarrow}
\newcommand{\harr}{\hookrightarrow}
\newcommand{\se}{\subseteq}
\newcommand{\mt}{\mapsto}
\newcommand{\two}{\twoheadrightarrow}
\newcommand{\fn}{F^n}
\newcommand{\ffn}{F^{n-2}}
\newcommand{\fff}{{F^\times}}
\newcommand{\eee}{E}
\newcommand{\stabe}{{\rm Stab}}
\newcommand{\diag}{{\rm diag}}
\renewcommand{\ker}{{\rm ker}}
\newcommand{\coker}{{\rm coker}}
\newcommand{\im}{{\rm im}}
\newcommand{\ind}{{\rm ind}}
\renewcommand{\char}{{\rm char}}
\newcommand{\zn}{\z\Big[\frac{1}{(n-2)!}\Big]}
\newcommand{\znn}{\z\Big[\frac{1}{(n-1)!}\Big]}
\newcommand{\nn}{\Big[\frac{1}{(n-2)!}\Big]}
\newcommand{\zmm}{\z\Big[\frac{1}{(m-1)!}\Big]}
\newcommand {\mtx}[4]
{\left(\!\!\!
\begin{array}{cc}
#1 & #2   \\
#3 & #4
\end{array}
\!\!\!\right)}
\newtheoremstyle{athm}
  {}
  {}
  {\itshape}
  {}
  {\scshape}
  {}
  {.5em}
  {\thmnote{#3}}
\theoremstyle{athm}
\newtheorem*{athm}{}
\begin{document}

\title[Homology of general linear groups]{Homology of $\GL_n$ over infinite fields outside the stability range}

\author{Behrooz Mirzaii}

\thanks{{\scriptsize
\hskip -0.4 true cm MSC(2020): Primary: 19D55, 19D45; Secondary: 20J06.
\newline Keywords: Homology of groups, general linear groups, Milnor $K$-groups}}

\begin{abstract}
For an infinite field $F$, we study the kernel of the map
\[
\begin{array}{c}
H_{n}(\GL_{n-1}(F),\zn) \arr H_{n}(\GL_{n}(F),\zn),
\end{array}
\]
and the  cokernel of
\[
\begin{array}{c}
H_{n+1}\Big(\GL_{n-1}(F),\zn\Big) \arr H_{n+1}\Big(\GL_{n}(F),\zn\Big).
\end{array}
\]
We give conjectural 
estimates of  these kernels and cokernels  and prove our conjectures for $n\leq 4$.


\end{abstract}

\maketitle
\section*{ Introduction}
\renewcommand{\thepage}{\arabic{page}}
\setcounter{page}{1}

Let $F$ be a field. For any positive integer $r$,  $\GL_r(F)$ embeds naturally in $\GL_{r+1}(F)$. The sequence of group 
embeddings $\GL_1(F) \se \GL_2(F) \se \GL_3(F) \se \cdots$ induces the sequence of homomorphisms of homology groups
\[
H_n(\GL_1(F), \z) \arr H_n(\GL_2(F), \z) \arr H_n(\GL_3(F), \z) \arr \cdots.
\]
These homology group appear in many areas of Algebra and Geometry. Unfortunately it is hard to calculate them explicitly.
Therefore all results allowing to compare them for different values of $n$ become quite important.

By an unpublished work of Quillen, if $F$ has more than two elements, then
\begin{equation}\label{st}
H_n(\GL_r(F), \z) \arr H_n(\GL_{r+1}(F), \z)
\end{equation}
is surjective for $r\geq n$ and is bijective for $r\geq n+1$. (Quillen's proof appears in his unpublished note \cite[pp.~1--15]{quillen-1974}. 
Unfortunately the first and the second pages of this note are unreadable, which makes it hard to follow the proof. But nevertheless,
see \cite[Theorem A]{S-W2020} for an exposition of Quillen's argument. Quillen's result also follows from \cite[Theorem~A]{G-K-R2018}.)

With a different method, Suslin showed that 
if the field is infinite, then (\ref{st}) is an isomorphism for $r\geq n$. Moreover, he showed that the cokernel of  
\[
H_n(\GL_{n-1}(F),\z)\arr H_n(\GL_{n}(F),\z)
\] 
is isomorphic to $K_n^M(F)$, the $n$-th Milnor $K$-group  of $F$ \cite[Theorem 3.4]{suslin1985}
(see also Theorem \ref{suslin} below). 

Not much is known about  the map (\ref{st}) outside the stability range, i.e. when $r< n$. The following conjecture is attributed to Suslin
(see \cite[Problem~4.13]{sah1989}, \cite[Remark~7.7]{B-Y1994}, \cite[Conjecture~2]{dejeu2002}).

\begin{athm}[{\bf Suslin's Injectivity Conjecture.}]
For any infinite field $F$ and  any $n>r$, the natural map $H_n(\GL_r(F), \q) \arr H_n(\GL_{r+1}(F), \q)$
is injective.
\end{athm}

The conjecture is trivial for $n=1, 2$. It was known for number fields by the work of Borel and Yang  \cite[Corollary 7.6]{B-Y1994}.
It was proved for $(n=3, r=2)$ by Sah \cite[Remark 3.19]{sah1989} and Elbaz-Vincent  \cite[Theorem 1.22]{elbaz1998} and for 
$(n=4, r=3)$ by the author \cite[Theorem 3]{mirzaii2008}. 

Recently, the conjecture has been proved for $r=n-1$ by Galatius, Kupers and 
Randal-Williams in \cite[Theorem~9.18]{G-K-R2020} (see Theorem \ref{G-K-R} below for a precise statement.)

Beside this, Galatius, Kupers and Randal-Williams have studied the cokernel of the map
\[
H_{n+1}(\GL_{n-1}(F),\q)\arr H_{n+1}(\GL_{n}(F),\q).
\]
 They showed that the group 
\[
\bigoplus _{n\geq 1} H_{n+1}(\GL_n(F), \GL_{n-1}(F),\q)
\]
has a natural $K_\ast^M(F)_\q$-module structure and explained how to generate this module efficiently 
\cite[Theorem~D or Theorem~9.5]{G-K-R2020}. This suggests that the above map has somewhat complicated
cokernel in general.

In this article we study the kernel of
\[
\begin{array}{c}
H_{n}(\GL_{n-1}(F),\zn)\arr H_{n}(\GL_{n}(F),\zn),
\end{array}
\] 
and the cokernel of
\[
\begin{array}{c}
H_{n+1}(\GL_{n-1}(F),\zn)\arr H_{n+1}(\GL_{n}(F),\zn).
\end{array}
\] 
We show that both are related to the second homology of a complex, which we introduce now. The chain of maps
\begin{equation}\label{seq1}
\fff^{\otimes n}\! \otimes_\z K_0^M(F)\!\overset{\delta_n^{(n)}}{\larr}\!
\fff^{\otimes (n-1)}\! \otimes_\z K_1^M(F)\!\overset{\delta_{n-1}^{(n)}}{\larr}\!
\!\cdots\!  \overset{\delta_{3}^{(n)}}{\larr} \!\fff^{\otimes 2}\! \otimes_\z K_{n-2}^M(F)
\end{equation}
\[
\overset{\delta_{2}^{(n)}}{\larr} \fff \otimes_\z K_{n-1}^M(F) \overset{\delta_{1}^{(n)}}{\larr}K_n^M(F)\arr 0
\]
with differentials
\[
\delta_i^{(n)}(a_1\otimes \cdots\otimes a_{i}\otimes\{b_1, \dots, b_{n-i}\})
\!=\!\!\sum_{j=1}^{i}a_1\otimes \cdots \otimes \widehat{a_j}\otimes \cdots\otimes a_{i}\otimes\{a_j, b_1, \dots, b_{n-i}\},
\]
is a complex.  If $n\geq 3$, it is easy to see that $\ker(\delta_1^{(n)})=\im(\delta_{2}^{(n)})$ (see Remark~\ref{rem:hh}). 

For any $n\geq 1$, let 
\[
\B_n(F):=
\begin{cases}
0 & \text{if $n=1$}\\
\widetilde{B}(F) & \text{if $n=2$,}\\
\ker(\delta_2^{(n)})/\im(\delta_{3}^{(n)}) & \text{if $n\geq 3$}
\end{cases}
\]
where $\widetilde{B}(F)$ is an extension of the Bloch group $B(F)$ by $\z/2$ if $\char(F)\!\neq\! 2$ and $\mu_{2^\infty}(F)$ is 
finite and is $B(F)$ otherwise. More precisely, $\widetilde{B}(F)\simeq K_3^\ind(F)/\tors(\mu(F), \mu(F))$, where the map 
$\widetilde{B}(F)\arr B(F)$ comes from the Bloch-Wigner exact sequence (\cite[Theorem~5.2]{suslin1991}, 
\cite[Theorem~5.1]{M-M2015}). Recall that $\mu(F)$ is the group of roots of unity and $\mu_{2^\infty}(F)$ is the group of
$2$-power roots of unity in $F$.

The groups $\B_n(F)$ seems to be interesting invariants of the field $F$.  Our first main result (Theorem \ref{thm:phi}) is:

\begin{athm} [{\bf Theorem A.}]
{\rm (i)} For  any positive integer $n$, there is a natural map 
{\[
\begin{array}{c}
\kappa_n\!\colon\!\B_n(F)\!\Big[\frac{1}{(n-2)!}\Big]\! \!\arr\! \ker\Big(\!H_{n}\!(\GL_{n-1}(F),\!\z\Big[\frac{1}{(n-2)!}\Big]\!)\!\! \arr\!\! 
H_{n}\!(\GL_{n}(F),\!\z\Big[\frac{1}{(n-2)!}\Big]\!)),
\end{array}
\]}
where its image is $(n-1)$-torsion.
\par {\rm (ii)} Let  the sequence
\[
\begin{array}{c}
H_n(\fff^2\times \GL_{n-2}(F),\zn)\overset{{\alpha_1}_\ast-{\alpha_2}_\ast}{-\!\!\!-\!\!\!-\!\!\!\larr} H_n(\fff\times\GL_{n-1}(F),\zn) \\
\overset{\inc_\ast}{\larr}H_n(\GL_{n}(F),\zn) \arr  0
\end{array}
\]
be exact for a fixed $n\geq 3$. If the natural map 
\[
\begin{array}{c}
H_m(\GL_{m-1}(F), \zmm) \arr H_m(\GL_{m}(F), \zmm)
\end{array}
\]
is injective for $m=n-1, n-2$, then $\kappa_n$ is surjective.
\end{athm}

We conjecture that $\kappa_n$ is surjective for any $n$ (Conjecture~\ref{conj:exact}) and prove it for
$n\leq 4$ (Corollary \ref{cor:n=4}). As an application we demonstrate that the natural homomorphisms
\[
\begin{array}{l}
H_{3}(\GL_{2}(F),\z\Big[\frac{1}{2}\Big])\arr H_{3}(\GL_{3}(F),\z\Big[\frac{1}{2}\Big]),
\end{array}
\]
\[
\begin{array}{l}
H_{4}(\GL_{3}(F),\z\Big[\frac{1}{6}\Big])\arr H_{4}(\GL_{4}(F),\z\Big[\frac{1}{6}\Big])
\end{array}
\]
are injective (Corollary \ref{cor:n=4}). The first injectivity (the case $n=3$) was already known by  
\cite[Theorem 5.4]{mirzaii-2008}. The second injectivity seems to be new.

Our second main result (Theorem \ref{thm:surj0}) concernes the quotient group
\[
\begin{array}{c}
H_{n+1}(\GL_{n}(F),\zn)/H_{n+1}(\GL_{n-1}(F),\zn).
\end{array}
\]
Inductively, the study of this group  can be reduced to the study of the quotient group 
\[
\begin{array}{c}
H_{n+1}(\GL_{n}(F),\zn)/H_{n+1}(\fff\times \GL_{n-1}(F),\zn).
\end{array}
\]
\begin{athm}[{\bf Theorem B.}]
Let 
\[
\begin{array}{c}
H_m(\GL_{m-1}(F), \zmm) \arr H_m(\GL_{m}(F), \zmm)
\end{array}
\]
be injective for $m=n-1, n-2$. Then there is a natural map
\[
\begin{array}{c}
\chi_n\colon \B_n(F)\nn \larr \displaystyle\frac{H_{n+1}( \GL_{n}(F),\z\nn)}{H_{n+1}(\fff\times \GL_{n-1}(F),\z\nn)}\cdot
\end{array}
\]
\end{athm}

We conjecture that $\chi_{n}$ always is surjective (Conjecture \ref{conj:chi}) and prove it for $n\leq 4$
(Propostion \ref{prp:n=2}, Corollary \ref{cor:B3-B4}). 

\begin{athm}[{\bf Remark.}]
{\rm The above results hold over any 
commutative ring with many units in the sense of Guin \cite[\S 1]{guin1989} (also see \cite[\S 2]{mirzaii2008}). 
The only exception is Proposition~\ref{prp:n=2}, where we need to assume that there is  a field $F$ and a 
ring homomorphism $R\arr F$ such that the restriction map $\mu(R)\arr \mu(F)$ is injective (see \cite[Theorem 5.1]{M-M2015}).
Recall that $R$ is a ring with many units if for any finite number of surjective linear
forms $f_i\colon R^2 \arr R$, there exists $v \in  R^2$ such that, for all $i$, $f_i(v) \in R^\times$. 
Important examples of rings with many units are semilocal rings with infinite residue fields.}
\end{athm}

In Section \ref{exa-Bn}, we study $\B_n(F)$ over certain fields and explain how the above theorems can be improved over them. 
Using these improved results for $n\leq 4$ and some {\sf unpublished results} of Galatius, Kupers and Randal-Williams (see
Theorems \ref{G-K-R}, \ref{g-k-r}) we prove the following (Theorem \ref{thm:div}):

\begin{athm}[{\bf Theorem C.}]
Let $F$ be a field such that $\fff$ is divisible. Then 
\par {\rm (i)} $H_{n}(\GL_{n-1}(F),\z) \arr H_{n}(\GL_{n}(F),\z)$ is injective for any $n$,
\par {\rm (ii)} $H_{n+1}(\GL_{n}(F),\z)/H_{n+1}(\GL_{n-1}(F),\z)$ is divisible for $n\neq 2$ and is uniquely divisible for $n\geq 5$.
\end{athm}

A field $F$ is called {\it real} if $-1$ is not the sum of squares and is called {\it real closed} if it is real and has no
real proper algebraic extension \cite[Chap. XI]{lang2002}. Over these fields we get (Theorem \ref{real}):

\begin{athm}[{\bf Theorem D.}]
Let  $F$ be a real closed field. Then
\par {\rm (i)}  $H_3(\GL_2(F),\z) \arr  H_3(\GL_3(F),\z)$ is injective,
\par {\rm (ii)}  $H_n(\GL_{n-1}(F),\z\half) \arr  H_n(\GL_n(F),\z\half)$ is injective for any $n$,
\par {\rm (iii)} $H_{n+1}(\GL_{n}(F),\z\half)/H_{n+1}(\GL_{n-1}(F),\z\half)$  is uniquely divisible for $n\geq 5$.
\end{athm}

\subsection*{Notation}
If $A \arr B$ is a homomorphism of abelian groups, by $B/A$ and $\im(A)$ we mean $\coker(A \arr B)$ and
$\im(A \arr B)$, respectively. We denote an element of $B/A$ 
represented by $b\in B$ again by $b$. Any inclusion of groups $H\se G$ is denoted by $\inc\colon H\arr G$. If $\kk$ is a commutative 
ring and $A$ an abelian group, by $A_\kk$ we mean $A \otimes_\z \kk$. Moreover, by $A\Big[\frac{1}{n}\Big]$ we mean 
$A\otimes _\z \z\Big[\frac{1}{n}\Big]=A_{\z[1/n]}$. We denote the $i$-th summand of $\fff^k:=\fff \times \cdots \times \fff$ by $F_i^\times$.

\subsection*{Convention}
 In this article we assume that $F$ always is an infinite field.

\subsection*{Acknowledgements.} 
The author thanks the anonymous referee for a very careful and thorough reading of the article and for very 
useful suggestions. Theorem C was suggested by the referee.

\section{{\bf The homology of general linear groups and Milnor \texorpdfstring{$K$}{Lg}-groups}}

For an arbitrary group $G$, let $B_\bullet(G)\overset{\varepsilon}{\arr}\z$ denote the right bar resolution of $G$.
For any left $G$-module $N$, $H_n(G, N)$ coincides  with the $n$-th homology of the complex 
$B_\bullet(G) \otimes_{\z[G]} N$. In particular,
\[
H_n(G,\z)=H_n(B_\bullet(G) \otimes_{\z[G]} \z)=H_n(B_\bullet(G)_G).
\]
For any $n$-tuple $(g_1, g_2, \dots, g_n)$ of pairwise commuting elements of $G$ and any commutative ring $\kk$, 
 let the homology class 
\[
{\rm \bf{c}}({g}_1, {g}_2,\dots, {g}_n)\in H_n(G,\kk)
\]
be represented by the cycle
\[
\sum_{\sigma \in \Sigma_n} {{\rm
sign}(\sigma)}[{g}_{\sigma(1)}| {g}_{\sigma(2)}|\dots|{g}_{\sigma(n)}]\otimes 1 \in B_n(G) \otimes_{\z[G]} \kk,
\]
where $\Sigma_n$ is the symmetric group of degree $n$.  In fact ${\rm \bf{c}}({g}_1,\dots, {g}_n)$ is
the image of $g_1\wedge \dots \wedge g_n$ under the composition
\[
\begin{array}{c}
\bigwedge_\z^n A_\kk \arr H_n(A,\kk) \arr H_n(G,\kk),
\end{array}
\]
where $A$ is the abelian subgroup of $G$ generated by $g_1,\dots, g_n$
and the first map is the Pontryagin product \cite[Chap. V, \S5 and \S6]{brown1994}.
It follows immediately from the known properties of the Pontryagin product that:\\
\par {\rm (a)} If $h_1\in G$ commutes with all the elements
$g_1, \dots, g_n$, then
\[
{\rm \bf{c}}(g_1h_1, g_2,\dots, g_n)= {\rm \bf{c}}(g_1, g_2,\dots,
g_n)+{\rm \bf{c}}(h_1, g_2,\dots, g_n).
\]
\par\ \ \ \ In particular if $h_1=g_1^{-1}$, then we have
\[
{\rm \bf{c}}(g_1^{-1}, g_2,\dots, g_n)=- {\rm \bf{c}}(g_1, g_2,\dots,g_n).
\]
\par {\rm (b)} For every $\sigma \in \Sigma_n$, 
\[
{\rm \bf{c}}(g_{\sigma(1)},\dots, g_{\sigma(n)})={\rm sign(\sigma)} {\rm \bf{c}}(g_1,\dots,g_n).
\]
\par {\rm (c)} The cup product of ${\rm \bf{c}}(g_1,\dots, g_p)\!\in\! H_p(G,\kk)$
and ${\rm \bf{c}}(g_1',\dots, g_q')\! \in \! H_q(G',\kk)$ is 
\[
{\rm\bf{c}}((g_1, 1), \dots, (g_p,1),(1,g_1'), \dots, (1,g_q')) \in H_{p+q}(G \times G',\kk).
\]

Let $F$ be an infinite field. For any $a\in \fff$ and any $1\leq i\leq n$, let  $\DD_{i, n}(a)$ be the diagonal matrix of size $n$  
with $a$ in the $i$-th position  of the diagonal and $1$ everywhere else. It is well-known (see \cite[Lemma 2.7.1]{suslin1985} or 
\cite[Proposition 3.1.2]{guin1989}) that the map 
\begin{gather*}
c_n\colon \fff \otimes_\z \cdots \otimes_\z \fff \arr H_n(\GL_n(F),\z)/H_n(\GL_{n-1}(F),\z),\\
a_1\otimes \cdots \otimes a_n \mt  {\bf c}(\DD_{1, n}(a_1), \dots, \DD_{n, n}(a_n))\!\!\!\!\pmod {H_n(\GL_{n-1}(F),\z)}
\end{gather*}
factors through the $n$-th Milnor $K$-group  of $F$:
\[
\bar{c}_n\colon K_n^M(F) \arr H_n(\GL_n(F),\z)/H_n(\GL_{n-1}(F),\z).
\]
Observe that $c_n$ is nothing but the composite
\begin{align*}
H_1(\fff,\z)^{\otimes n}\overset{\cup}{\larr}  H_n(T_n(F), \z) &\arr H_n(\GL_n(F),\z) \\
&\two H_n(\GL_n(F),\z)/H_n(\GL_{n-1}(F),\z),
\end{align*}
with $T_n(F)$ the group of diagonal $(n\times n)$-matrices.

\begin{thm}[Suslin, Nesterenko-Suslin]\label{suslin}
{\rm (i)} The natural map 
\[
\inc_\ast\colon H_n(\GL_r(F),\z)\arr H_n(\GL_{r+1}(F),\z)
\] 
is an isomorphism for any $r \geq n$.
\par {\rm (ii)} 
There exists a natural map  $s_n\colon H_n(\GL_n(F),\z) \arr K_n^M(F)$ such that 
\[
H_n(\GL_{n-1}(F),\z) \overset{\inc_\ast}{\larr} H_n(\GL_n(F),\z) \overset{s_n}{\larr} K_n^M(F) \arr 0
\]
is an exact sequence. Moreover, $\bar{s}_n$ is the inverse of $\bar{c}_n$.
\par {\rm (iii)} The composite 
\[
K_n^M(F) \arr K_n(F)\overset{h_n}{\larr} H_n(\GL(F),\z)\overset{\inc_\ast^{-1}}{\larr} H_n(\GL_n(F),\z)\overset{s_n}{\larr} K_n^M(F)
\]
coincides with multiplication by $(-1)^{n-1}(n-1)!$, where $h_n$ is the Hurewicz map.
\end{thm}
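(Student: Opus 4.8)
The plan is to follow Suslin and Nesterenko--Suslin: let $\GL_n(F)$ act on a highly acyclic complex built from $F^n$ and read off all three statements from the resulting isotropy spectral sequence, with the infinitude of $F$ entering essentially. Concretely, I would use the complex $C_\bullet(F^n)$ whose degree-$q$ term is the free abelian group on tuples $(v_0,\dots,v_q)$ of vectors of $F^n$ in general position (any $\min(q+1,n)$ of them linearly independent), with boundary the alternating sum of the ``delete one vector'' maps and with augmentation $C_\bullet(F^n)\arr\z$. The first and technically most demanding step is Suslin's acyclicity theorem: because $F$ is \emph{infinite}, this augmented complex is acyclic. I would prove it by the interpolation trick --- a cycle involves only finitely many vectors, hence only finitely many linear conditions, and since an infinite field is not a finite union of proper subspaces one can choose a vector in sufficiently general position to cone the cycle off. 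The group $\GL_n(F)$ has finitely many orbits of $q$-simplices, with stabilizers of the form $M_{q+1,\,n-q-1}(F)\rtimes\GL_{n-q-1}(F)$ (and smaller subgroups once $q+1>n$), on whose ``matrix part'' the central scalars of the $\GL$-factor act with nonzero weights. Thus the isotropy spectral sequence expresses $H_\ast(\GL_n(F),\z)$ through the homologies of these stabilizer groups, twisted by permutation modules.

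For part (i) I would combine this with two further inputs. The first is a ``center kills homology'' lemma: for a stabilizer $G=M\rtimes\GL_m(F)$ as above, the central scalar $aI_m$ acts trivially on $H_\ast\bigl(\GL_m(F),H_i(M,\z)\bigr)$ while acting on $H_i(M,\z)$ with nonzero weights for $i\ge1$, so --- choosing $a\in F^\times$ of large order --- the operator $aI_m-\id$ is invertible on $H_i(M,\z)$ and therefore $H_\ast(\GL_m(F),H_i(M,\z))=0$ for $i\ge1$; the Hochschild--Serre spectral sequence of $1\arr M\arr G\arr\GL_m(F)\arr1$ then shows the projection $G\arr\GL_m(F)$ is a homology isomorphism. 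The second input is an induction on $n$: stabilization is an isomorphism in the relevant range for all smaller general linear groups. Running the standard homology-stability bookkeeping --- comparing the spectral sequences for $\GL_n(F)$ and $\GL_{n-1}(F)$ --- yields surjectivity of $\inc_\ast$ for $r\ge n$ and bijectivity for $r\ge n+1$ (Quillen's range), and the full acyclicity of $C_\bullet(F^n)$, which is exactly what infinitude buys, improves the bijectivity to $r\ge n$.

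For part (ii) I would read the cokernel off the bottom of the same spectral sequence: it is the contribution of the diagonal torus, and the classes ${\bf c}(\DD_{1,n}(a_1),\dots,\DD_{n,n}(a_n))$ already generate it, so $c_n$ is surjective. To see that $c_n$ factors through $K_n^M(F)$ I would verify multilinearity and the Steinberg relation $\{a,1-a\}=0$ at the level of homology classes, the latter being a computation inside $\GL_2(F)$ placing ${\bf c}(\DD_{1,2}(a),\DD_{2,2}(1-a))$ in the image of $H_2(\GL_1(F),\z)$; this produces $\bar c_n\colon K_n^M(F)\arr H_n(\GL_n(F),\z)/H_n(\GL_{n-1}(F),\z)$. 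For the inverse I would construct $s_n$ following Nesterenko--Suslin as an explicit chain map from a resolution computing $H_n(\GL_n(F),\z)$ into the degree-$n$ term of the Milnor complex, check $\bar s_n\circ\bar c_n=\id$ by a direct chain computation, and conclude --- since $\bar c_n$ is already surjective --- that $\bar c_n$ and $\bar s_n$ are mutually inverse isomorphisms.

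For part (iii), after the identification $H_n(\GL(F),\z)\cong H_n(\GL_n(F),\z)$ from (i) the composite in question is essentially $s_n\circ h_n$ evaluated on symbols. I would compare two explicit cycles: $h_n(\{a_1\}\cdots\{a_n\})$ is the $n$-fold product of the degree-one classes $[a_i]$ under the multiplicative ($K$-theoretic) product, with respect to which the Hurewicz map is multiplicative, whereas ${\bf c}(\DD_{1,n}(a_1),\dots,\DD_{n,n}(a_n))$ is an antisymmetrized torus cycle; an induction on $n$ --- the step from $n-1$ to $n$ contributing a factor $n-1$ --- shows that modulo $H_n(\GL_{n-1}(F),\z)$ these two differ by $(-1)^{n-1}(n-1)!$. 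Hence $s_n h_n(\{a_1,\dots,a_n\})=(-1)^{n-1}(n-1)!\,\{a_1,\dots,a_n\}$, which suffices since symbols generate $K_n^M(F)$. Rationally this is transparent: $H_\ast(\GL(F),\q)$ is the free graded-commutative Hopf algebra on the primitives $K_\ast(F)\otimes\q$, $h_n$ is the inclusion of primitives, and $(-1)^{n-1}(n-1)!$ is precisely the Newton-identity constant relating an $n$-fold product of weight-one primitives to the weight-$n$ primitive representing the same class. Throughout, the main obstacle is the first step --- Suslin's acyclicity over an infinite field together with the delicate spectral-sequence bookkeeping that sharpens the stability range of (i) down to $r=n$; granted that and the ``center kills'' lemma, parts (ii) and (iii) reduce to intricate but essentially finite chain-level computations.
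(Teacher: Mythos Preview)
The paper does not give a proof of this theorem: it is stated as a result of Suslin \cite{suslin1985} and Nesterenko--Suslin \cite{N-S1990} and cited without argument. So there is no ``paper's own proof'' to compare against; your proposal is a sketch of the arguments in those original sources.

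As such a sketch it is broadly faithful. The complex of tuples in general position, its acyclicity over an infinite field via the ``choose a vector avoiding finitely many hyperplanes'' coning argument, the Nesterenko--Suslin reduction of stabilizer homology via the central-torus-acts-with-weight trick, and the inductive stability bookkeeping are indeed the ingredients of \cite{suslin1985}. A couple of points are vaguer than they should be. For (ii), your description of $s_n$ as ``an explicit chain map into the degree-$n$ term of the Milnor complex'' elides the real content: in \cite{N-S1990} the map is produced by analyzing the $E^2$-page of the very spectral sequence you set up, identifying the relevant subquotient with $K_n^M(F)$ via an explicit presentation (this is where the Steinberg relation actually appears, not as a separate $\GL_2$ computation). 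For (iii), the inductive claim that passing from $n-1$ to $n$ contributes a factor $n-1$ is correct in spirit but needs the concrete matrix identity $A_{i,n}=\diag(A_{i,n-1},1)$ together with the expansion of the Hurewicz image along the Loday product; the paper records the precise formulation it needs as Proposition~\ref{hur}, citing \cite{mirzaii2008} and \cite{M-M2015}. Your closing rational Hopf-algebra heuristic is suggestive but not a proof of the integral statement.

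In short: there is nothing to correct relative to the paper, since the paper proves nothing here; relative to the cited sources your outline is on the right track but would need the spectral-sequence identification of the cokernel (rather than an ad hoc $\GL_2$ verification of Steinberg) and an honest chain-level computation for the $(n-1)!$ factor to become a proof.
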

\begin{proof}
See \cite[Theorem 3.4, Corollary 4.4]{suslin1985} or \cite[Theorems 3.25, 4.1, Remark 3.27]{N-S1990}.
\end{proof}

In  \cite[Proposition~4]{mirzaii2008} the author showed that if $n\geq 3$ and if $\kk$ is a field such that $(n-1)!\in \kk^\times$, 
then the injectivity of 
\[
H_n(\GL_{n-1}(F),\kk) \arr H_n(\GL_n(F),\kk)
\]
 follows, by an induction process, from the exactness of the complex 
\begin{equation}\label{exactness}
H_n(\fff^2\times \GL_{n-2}(F),\kk) \overset{{\alpha_1}_\ast-{\alpha_2}_\ast}{-\!\!\!-\!\!\!-\!\!\!-\!\!\!\larr} 
H_n(\fff \times \GL_{n-1}(F),\kk) 
\end{equation}
\[
\overset{\inc_\ast}{\larr} H_n(\GL_n(F),\kk) \arr 0,
\]
where $\alpha_1(\diag(a,b, A))=\diag(b,a,A)$ and $\alpha_2=\inc$. 

The exactness of the above complex was known for $n=3$ \cite[Corollary~3.5]{mirzaii-2008} and $n=4$ \cite[Theorem~2]{mirzaii2008}. 
Recently Galatius, Kupers and Randal-Williams have proved, by completely new and interesting methods, that this complex is indeed 
exact.  Thus they proved Suslin's injectivity conjecture for $r=n-1$.

\begin{thm}[\text{Galatius, Kupers, Randal-Williams \cite[Section~9.6]{G-K-R2020}}] \label{G-K-R}
Let $\kk$ be a field such that $(n-1)!\in \kk^\times$. Then
\par {\rm (i)} For any $n\geq 3$, the complex $(\ref{exactness})$ is exact,
\par {\rm (ii)} For any $n$,  the natural map $H_n(\GL_{n-1}(F),\kk)\arr H_n(\GL_n(F),\kk)$ is injective.
\end{thm}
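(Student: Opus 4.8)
First I would reduce everything to part~(i): part~(ii) is known for $n\leq 2$ and, for $n\geq 3$, follows from part~(i) by the inductive argument of \cite[Proposition~4]{mirzaii2008} recalled just above the statement. So the real task is the exactness of the three-term complex (\ref{exactness}). That it is a complex is immediate, since the two composites $\inc\circ\alpha_i$ are conjugate subgroup inclusions and hence agree on homology; that the last map is onto is Suslin's stability theorem (Theorem~\ref{suslin}(i)). Thus the only point to prove is the inclusion $\ker(\inc_\ast)\subseteq\im(\alpha_{1\ast}-\alpha_{2\ast})$, i.e. that $H_n(\GL_n(F),\kk)$ is obtained from $H_n(\fff\times\GL_{n-1}(F),\kk)$ by imposing only the transposition relation $\alpha_{1\ast}=\alpha_{2\ast}$.

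The natural line of attack — the one used for $n=3,4$ in \cite{mirzaii-2008}, \cite{mirzaii2008} — goes through Suslin's method: resolve the trivial $\GL_n(F)$-module $\z$ by the complex $C_\bullet(F^n)$ of tuples of vectors of $F^n$ in general position (acyclic in the range of degrees that concerns us), apply $H_\bullet(\GL_n(F),-)$, and exploit the resulting spectral sequence, whose $E^1$-page is built from the $\kk$-homology of the stabilizers of generic configurations — smaller general linear groups and diagonal tori. Suslin's analysis already yields the isomorphism $H_n(\GL_n(F))/H_n(\GL_{n-1}(F))\cong K_n^M(F)$, of which the Milnor-type complex (\ref{seq1}) is the natural carrier; the sharper statement (\ref{exactness}) requires controlling further layers of the spectral sequence, and the real obstructions are (a) its higher differentials and (b) the unstable homology groups $H_{<n}(\fff^{j}\times\GL_{n-j}(F),\kk)$ occurring in the lower rows, both of which must be shown to be harmless once $(n-1)!$ is inverted. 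For $n\leq 4$ this is a finite check; \emph{for general $n$ it is precisely here that the elementary method breaks down}, since the unstable homology of $\GL_m(F)$, $m<n$, is not known and the denominators $(n-1)!$ are only barely large enough.

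The proof that works for all $n$, carried out in \cite[Section~9.6]{G-K-R2020}, replaces this bookkeeping by the machinery of cellular $E_\infty$-algebras. One regards $\mathbf{R}=\bigoplus_{n\geq 0}C_\bullet(B\GL_n(F);\kk)$ as a graded $E_\infty$-algebra under block sum $\GL_a(F)\times\GL_b(F)\to\GL_{a+b}(F)$ and studies its derived $E_\infty$-indecomposables, whose homology measures the failure of homological stability. A ``standard connectivity'' estimate gives the vanishing of the $E_1$-homology below the slope-one line, and the exactness of (\ref{exactness}) becomes equivalent to the vanishing of the remaining low-degree $E_\infty$-cells of $\mathbf{R}_\kk$ just outside the stable range. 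That vanishing is established by analysing the splitting complex of $F^n$ — a highly connected (indeed Cohen--Macaulay) complex closely related to the Tits building, whose top homology is a Steinberg-type module — together with the fact that $K_\ast^M(F)$ is generated in weight one modulo the Steinberg relation; passing from $E_1$- to $E_\infty$-homology introduces symmetric-group correction terms, and inverting $(n-1)!$ is what kills them in rank $n$, which is the origin of the hypothesis $(n-1)!\in\kk^\times$. In both guises the main obstacle is the same — ruling out the ``secondary'' classes: the higher spectral-sequence differentials in the first approach, equivalently the extra $E_\infty$-cells on the slope-one line in the second — and it is exactly here that the genuinely new geometric input of \cite{G-K-R2020} is indispensable; granting that vanishing, the deduction of (i), and then of (ii), is formal.
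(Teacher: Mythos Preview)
Your proposal is a reasonable high-level summary of how Theorem~\ref{G-K-R} is proved, but you should note that the present paper does \emph{not} give its own proof of this theorem at all: it is quoted as an external result of Galatius, Kupers and Randal-Williams, with a bare citation to \cite[Section~9.6]{G-K-R2020}. The only thing the paper adds is the remark immediately after the statement that Proposition~\ref{hur} ``plays a crucial role in the proof of (i)$\Rightarrow$(ii)'', confirming the reduction you describe via \cite[Proposition~4]{mirzaii2008}.

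So there is nothing to compare your argument against in this paper. Your sketch of the $E_\infty$-cellular approach is broadly faithful to what \cite{G-K-R2020} actually does, and you are right that the elementary spectral-sequence method of \cite{mirzaii-2008}, \cite{mirzaii2008} only handles $n\leq 4$. If you are writing this up, the honest thing is simply to cite \cite{G-K-R2020} as the paper does, rather than to reproduce or paraphrase their argument; any genuine proof would require the full machinery of that paper and cannot be compressed into a paragraph.
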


\begin{rem}\label{conj:inj}
We conjecture that for any $n$, the natural map
\[
\begin{array}{c}
H_n(\GL_{n-1}(F),\znn) \arr H_n(\GL_{n}(F),\znn)
\end{array}
\]
is injective (see also Remark \ref{rem:k3}(ii) bellow). This follows from the exactness of the sequence
\[
\begin{array}{c}
H_n(\fff^2\times \GL_{n-2}(F),\znn)\overset{{\alpha_1}_\ast-{\alpha_2}_\ast}{-\!\!\!-\!\!\!-\!\!\!-\!\!\!\larr}
H_n(\fff\times\GL_{n-1}(F),\znn)\\
\overset{\inc_\ast}{\larr} H_n(\GL_{n}(F),\znn) \arr   0
\end{array}
\]
for all $n\geq 3$ (see the proof of Theorem \ref{thm:phi} below). The exactness of the above sequence  
does not follow directly from  the result of Galatius, Kupers, Randal-Williams mentioned in Theorem \ref{G-K-R} (at least we couldn't prove this).
But we are very hopeful that their methods developed in \cite[Section 9]{G-K-R2020} might be applied to this general setting.
\end{rem}

\section{On the kernel of \texorpdfstring{$H_{n}(\GL_{n-1}(F),\z) \arr H_{n}(\GL_{n}(F),\z)$}{Lg}}

In this section we show that the group $\B_n(F)$ is related to the kernel of 
\[
H_{n}(\GL_{n-1}(F),\z) \arr H_{n}(\GL_{n}(F),\z).
\]
This connection first was realised for $n=3$ in  \cite[Remark~3.5]{mirzaii2012}. The following is Theorem A of the introduction.

\begin{thm}\label{thm:phi}
\par {\rm (i)} For  any positive integer $n$, there is a natural map 
\[
\varphi_n\colon\B_{n}(F) \arr \ker\Big(H_{n}(\GL_{n-1}(F),\z) \arr H_{n}(\GL_{n}(F),\z)\Big),
\]
which factors through the multiplication by $(n-2)$-map $\B_n(F)\arr (n-2)\B_{n}(F)$, $x\mt (n-2)x$, and its image is $(n-1)$-torsion.
In particular, there is a natural map 
\[
\begin{array}{c}
\kappa_n\colon\!\B_n(F)\nn\!\arr\! \ker\Big(H_n(\GL_{n-1}(F),\zn)\! \arr\! 
H_n(\GL_{n}(F),\zn)\Big),
\end{array}
\]
where  its image is $(n-1)$-torsion.
\par {\rm (ii)} Let  the sequence
\[
\begin{array}{c}
H_n(\fff^2\times \GL_{n-2}(F),\zn)\overset{{\alpha_1}_\ast-{\alpha_2}_\ast}{-\!\!\!-\!\!\!-\!\!\!\larr} H_n(\fff\times\GL_{n-1}(F),\zn) \\
\overset{\inc_\ast}{\larr}H_n(\GL_{n}(F),\zn) \arr  0
\end{array}
\]
be exact for a fixed $n\geq 3$. If the natural map $H_m(\GL_{m-1}(F), \zmm) \arr H_m(\GL_{m}(F), \zmm)$ is
injective for $m=n-1, n-2$, then $\kappa_n$ is surjective.
\end{thm}
\begin{proof}
(i) We define $\varphi_1$ and $\varphi_2$ as trivial maps. So let $n\geq 3$.
Let $x=\sum a\otimes b\otimes\{c_1, \dots, c_{n-2}\}\in \fff \otimes \fff\! \otimes K_{n-2}^M(F)$ 
represents an element of $\B_{n}(F)$. Let $y$ be the image of $x$ by the map  
\[
\id_\fff\!\otimes\id_\fff\!\otimes\iota_{n-2}\colon\! \fff\! \otimes \fff\! \otimes\! 
K_{n-2}^M(F)\! \arr\! F^\times\! \otimes F^\times\! \otimes\! H_{n-2}(\GL_{n-2}(F),\z).
\]
Thus 
\[
y=\sum a\otimes b\otimes[c_1,\dots,c_{n-2}]=\sum a\otimes b\otimes {\bf c}(C_{1,n-2}, \dots, C_{n-2,n-2}). 
\]
Consider the complex
\[
\begin{array}{c}
H_n(\fff^2\times \GL_{n-2}(F),\z) \overset{{\alpha_1}_\ast-{\alpha_2}_\ast}{-\!\!\!-\!\!\!-\!\!\!-\!\!\!\larr} 
H_n(\fff \times \GL_{n-1}(F),\z) \\
\overset{\inc_\ast}{\larr} H_n(\GL_n(F),\z) \arr 0,
\end{array}
\]
and set $\alpha={\alpha_1}_\ast-{\alpha_2}_\ast$.
The restriction of $\alpha$ on 
\[
F_1^\times\otimes F_2^\times\otimes H_{n-2}(\GL_{n-2}(F),\z)\se H_n(\fff^2\times \GL_{n-2}(F),\z)
\]
factors through
$F^\times\otimes  H_{n-1}(\GL_{n-1}(F),\z)\se H_n(\fff \times \GL_{n-1}(F),\z)$ and we have
\[
\begin{array}{l}
\alpha(y)\!=\!-\!\sum\! \Big(b\otimes {\bf c}(\diag(a, I_{n-2}), \diag(1, C_{1,n-2})), \dots, \diag(1, C_{n-2,n-2}))\\
\ \ \ \ \ \ \ \ \ \ \ \ \ \ \ \ \  \!+a\otimes {\bf c}(\diag(b, I_{n-2}), \diag(1, C_{1,n-2}), \dots, \diag(1, C_{n-2,n-2}))\Big)\\
\ \ \ \ \ \ \ \ \  =\!-\!\sum\! \Big(b\otimes {\bf c}(\diag(I_{n-2}, a), C_{1,n-1}, \dots, C_{n-2,n-1})\\
\ \ \ \ \ \ \ \ \ \ \ \ \ \ \ \ \  \!+a\otimes {\bf c}(\diag(I_{n-2},b), C_{1,n-1}, \dots, C_{n-2,n-1})\Big).
\end{array}
\]
(Here we denote the $i$-th summand of  $\fff^2=\fff \times \fff$ by $F_i^\times$ for $i=1,2$).
On the other hand, the composite 
\[
F^\times\otimes F^\times\otimes\! K_{n-2}^M(F)\! \overset{\delta_2^{(n)}}{-\!\!\!\larr} \! F^\times\otimes K_{n-1}^M(F)
\!\!\overset{\id_\fff\otimes \iota_{n-1}}{-\!\!\!-\!\!\!-\!\!\!-\!\!\!-\!\!\!\larr}\! F^\times\otimes H_{n-1}(\GL_{n-1}(F),\z)
\]
takes $x$ to $0=\sum \Big(b\otimes [a, c_1,\dots, c_{n-2}]+a\otimes [b, c_1,\dots, c_{n-2}]\Big)$. We have
\[
\begin{array}{rl}
\!\!\alpha\!((n-2)y)&\!\!\!\!\!=\!-(n-2)\!\sum\! \Big(b\otimes {\bf c}(\diag(I_{n-2}, a), C_{1,n-1}, \dots, C_{n-2,n-1})\\
& \ \ \ \ \ \ \ \ \ \ \ \ \ \   +a\otimes {\bf c}(\diag(I_{n-2},b), C_{1,n-1}, \dots, C_{n-2,n-1})\Big)\\
&\!\!\!\!=(\!-1)^{n-2}\!\sum\! \Big(b\otimes {\bf c}(C_{1,n-1}, \dots, C_{n-2,n-1}, \diag(I_{n-2}, a^{-(n-2)}))\\
&\hspace{1.7cm}+a\otimes\! {\bf c}(C_{1,n-1}, \dots, C_{n-2,n-1}, \diag(I_{n-2}, b^{-(n-2)}))\Big)\\
&\!\!\!\!=(-1)^{n-2}\!\sum\! \Big(b \otimes [c_1, \dots, c_{n-2},a] + a \otimes [c_1, \dots, c_{n-2},b]\\
&\hspace{1.8cm} -b \otimes{\bf c}(C_{1, n-1}, \dots ,C_{n-2, n-1}, \diag(aI_{n-2}, 1))\\
& \hspace{1.8cm}-a \otimes{\bf c}(C_{1, n-1}, \dots ,C_{n-2, n-1}, \diag(bI_{n-2}, 1))\Big)\\
&\!\!\!\!= -\sum \Big(b \otimes{\bf c}(\diag(aI_{n-2}, 1),C_{1, n-1}, \dots ,C_{n-2, n-1})\\
&\hspace{0.9cm} +a \otimes{\bf c}(\diag(bI_{n-2}, 1),C_{1, n-1}, \dots ,C_{n-2, n-1})\Big).
\end{array}
\]
Let $A_1$, $A_2$ and $B_1$, $B_2$ be the following summands of $H_n(\fff \times \GL_{n-1}(F),\z)$ and 
$H_n(\fff^2\times \GL_{n-2}(F),\z)$, respectively:  
\[
\begin{array}{ll}
A_1 =H_{n}(\GL_{n-1}(F),\z),& B_1=F_2^\times\otimes H_{n-1}(\GL_{n-2}(F),\z), \\
A_2=F^\times\otimes H_{n-1}(\GL_{n-1}(F),\z), &B_2=F_1^\times\otimes F_2^\times\otimes H_{n-2}(\GL_{n-2}(F),\z).
\end{array}
\]
Consider the following element of $B_1$:
\[
\begin{array}{l}
z=\sum \Big(b \otimes{\bf c}(aI_{n-2}, C_{1, n-2}, \dots ,C_{n-2, n-2}) \\
\ \ \ \ \ \ \ \  +a \otimes{\bf c}(bI_{n-2}, C_{1, n-2}, \dots, C_{n-2, n-2})\Big).
\end{array}
\]
The map $\alpha|_{B_1\oplus B_2}$ factors through $A_1\oplus A_2$ and 
\[
\alpha|_{B_1\oplus B_2}(z,(n-2)y)=(-w,0)\in A_1\oplus A_2\se H_n(\fff \times \GL_{n-1}(F),\z),
\]
 where
\[
\begin{array}{l}
w=\sum\Big( {\bf c}(\diag(I_{n-2}, b), \diag(aI_{n-2},1), C_{1, n-1}, \dots ,C_{n-2, n-1})\\
\ \ \ \ \ \ \ \ \ \ \ \ + {\bf c}(\diag(I_{n-2},a), \diag(bI_{n-2},1),  C_{1, n-1}, \dots ,C_{n-2, n-1})\Big).
\end{array}
\]
Clearly $w$ is in the kernel of $H_{n}(\GL_{n-1}(F),\z) \arr H_{n}(\GL_{n}(F),\z)$. Now define 
\[
\varphi_n\colon\B_{n}(F) \arr \ker\Big(H_{n}(\GL_{n-1}(F),\z) \arr H_{n}(\GL_{n}(F),\z)\Big), \ \ \ \varphi(x)=w.
\]
By a direct computation, using the definition of $\varphi$, it is easy to see that 
\[
\varphi_n\Big(b\otimes c\otimes  \{d_1, \dots, d_{n-3}, a\}+a\otimes c \otimes \{d_1, \dots, d_{n-3}, b\}
+a\otimes b\otimes \{d_1, \dots, d_{n-3}, c\}\Big)
\]
is trivial. This shows that $\varphi_n$ is well-defined.

To prove that $w$ is $(n-1)$-torsion, consider the composite
\begin{gather*}
\fff\! \otimes\! K_{n-1}^M(F)
\!\!\overset{\id_\fff\!\otimes \iota_{n-1}}{-\!\!\!-\!\!\!-\!\!\!-\!\!\!\larr} \! F^\times\! \otimes\! H_{n-1}(\GL_{n-1}(F),\z)\! 
\overset{\cup}{\arr}\! H_n(\fff\! \times \GL_{n-1}(F),\z) \\
\overset{\tau_\ast}{\larr} H_n(\GL_{n-1}(F),\z),
\end{gather*}
where $\tau \colon\fff \times \GL_{n-1}(F)\arr \GL_{n-1}(F)$ is the multiplication $(a, A)\mt aA=(aI_{n-1})A$. Under this composite,
\[
\begin{array}{rl}
0&=\sum \Big(b\otimes \{a, c_1,\dots, c_{n-2}\}+a\otimes \{b, c_1,\dots, c_{n-2}\}\Big)\\
&=(-1)^{n-2}\sum \Big(b\otimes \{c_1,\dots, c_{n-2},a\}+a\otimes \{c_1,\dots, c_{n-2}, b\}\Big)
\end{array}
\]
 maps to
\[
\begin{array}{rl}
0&\!\!\! =\!(-1)^{n-2}\!\sum\! \Big({\bf c}(bI_{n-1}, C_{1,n-1}, \dots, C_{n-2,n-1}, A_{n-1, n-1}) \\
& \hspace{1.8cm} + {\bf c}(aI_{n-1}, C_{1,n-1}, \dots, C_{n-2,n-1}, B_{n-1, n-1})\Big)\\
& \!\!\!=\sum \Big({\bf c}(\diag(bI_{n-2}, b), \diag(aI_{n-2}, a^{-(n-2)}), C_{1,n-1}, \dots, C_{n-2,n-1}) \\
&\ \ \ \ \!\ +  {\bf c}(\diag(aI_{n-2}, a), \diag(bI_{n-2}, b^{-(n-2)}), C_{1,n-1}, \dots, C_{n-2,n-1})\Big)\\
&\!\!\!=\sum \Big({\bf c}(\diag(bI_{n-2}, 1), \diag(I_{n-2}, a^{-(n-2)}),C_{1,n-1}, \dots,C_{n-2,n-1}) \\
&\hspace{0.6cm}+ {\bf c}(\diag(I_{n-2}, b), \diag(aI_{n-2}, 1), C_{1,n-1}, \dots, C_{n-2,n-1}) \\
&\hspace{0.6cm} +{\bf c}(\diag(aI_{n-2}, 1), \diag(I_{n-2}, b^{-(n-2)}), C_{1,n-1}, \dots, C_{n-2,n-1}) \\
& \hspace{0.6cm}+ {\bf c}(\diag(I_{n-2}, a), \diag(bI_{n-2}, 1),C_{1,n-1}, \dots,C_{n-2,n-1})\Big)\\
& \!\!\!=(n-1)w.
\end{array}
\]
(Observe that $C_{i,n-1}=\diag(C_{i,n-2}, 1)$ for $1\leq i\leq n-2$.)
Therefore $w$ is $(n-1)$-torsion. 

The second part follows from tensoring $\varphi_n$ with $\zn$ and 
the fact that localization is exact. Following the above argument, it is easy to see that
for any $n\geq 3$,  $\kappa_n$ is given by
\[
\sum a\otimes b\otimes\{c_1, \dots, c_{n-2}\} \mt 
\begin{array}{c}
\frac{(-1)^{n-2}}{(n-2)!}w.
\end{array}
\]
\par (ii) The claim is trivial for $n=1,2$. So let $n\geq 3$. By Corollary \ref{cor:dec} and  the injectivity hypothesis for  $m=n-1, n-2$, we have 
 the decomposition
\[
\begin{array}{c}
H_m(\GL_{m}(F),\zmm)\!\simeq\! H_{m}(\GL_{m-1}(F),\zmm)\!\oplus\! K_m^M(F)\Big[\frac{1}{(m-1)!}\Big].
\end{array}
\]

For simplicity, we set $\kk=\zn$. By the K\"unneth formula, and specifying a splitting (see \cite[Theorem 2.7]{taylor2015}), we have
\[
\begin{array}{c}
H_n(\fff\times \GL_{n-1}(F),\kk)\simeq \bigoplus_{i=1}^4 T_i, \\
\ H_n(\fff^2\times \GL_{n-2}(F),\kk)\simeq \bigoplus_{i=1}^{11} U_i,
\end{array}
\]
where
\[
\hspace{-1cm}
\begin{array}{l}
T_1=H_n(\GL_{n-1}(F),\kk),\\
T_2=\fff\otimes H_{n-1}(\GL_{n-1}(F),\kk)\simeq T_2 '\oplus T_2'', \\
T_2'=\fff\otimes H_{n-1}(\GL_{n-2}(F),\kk),\\
T_2''=\fff\otimes K_{n-1}^M(F)_\kk, \\
T_3=\bigoplus_{i=2}^n H_i(\fff,\kk)\otimes H_{n-i}(\GL_{n-1}(F),\kk),\\
T_4=\bigoplus_{i=1}^{n-2}\tors(H_i(F^\times,\z), H_{n-i-1}(\GL_{n-1}(F),\z))_\kk,
\end{array}
\]
and 
\[
\begin{array}{l}
U_1 =H_n(\GL_{n-2}(F),\kk),\\
U_2 = \bigoplus_{i=1}^n H_i(F_1^\times,\kk) \otimes H_{n-i}(\GL_{n-2}(F),\kk),\\
U_3 = \bigoplus_{i=1}^n H_i(F_2^\times,\kk) \otimes H_{n-i}(\GL_{n-2}(F),\kk),\\
U_4 = F_1^\times\otimes F_2^\times \otimes H_{n-2}(\GL_{n-2}(F),\kk)\simeq U_4'\oplus U_4'',\\
U_4' =F_1^\times\otimes F_2^\times \otimes H_{n-2}(\GL_{n-3}(F),\kk),\\
U_4''=F_1^\times\otimes F_2^\times \otimes K_{n-2}^M(F)_\kk,\\
U_5 = \bigoplus_{\underset{i,j>0}{i+j\geq 3}} H_i(F_1^\times,\kk) \otimes H_j(F_2^\times,\kk) \otimes H_{n-i-j}(\GL_{n-2}(F),\kk),\\
U_6 = \bigoplus_{i=1}^{n-2}\tors(H_i(F_1^\times,\z), H_{n-i-1}(\GL_{n-2}(F),\z))_\kk,\\
U_7 = \bigoplus_{i=1}^{n-2}\tors(H_i(F_2^\times,\z), H_{n-i-1}(\GL_{n-2}(F),\z))_\kk,\\
U_8 = \bigoplus_{i=1}^{n-2}\tors(H_i(F_1^\times,\z), H_{n-i-1}(F_2^\times,\z))_\kk.\\
U_9=\bigoplus_{\underset{i,j>0}{i+j\leq n-2}}H_i(F_1^\times,\z)\otimes \tors\Big(H_j(F_2^\times,\z), H_{n-i-j-1}(\GL_{n-2}(F),\z)\Big)_\kk,\\
U_{10}=
\bigoplus_{\underset{i,j>0}{i+j\leq n-2}}\tors\Big(H_i(F_1^\times,\z), H_j(F_2^\times,\z)\otimes H_{n-i-j-1}(\GL_{n-2}(F),\z)\Big)_\kk,\\
U_{11}=
\bigoplus_{\underset{i,j>0}{i+j\leq n-2}}\tors\bigg(H_i(F_1^\times,\z), \tors\big(H_j(F_2^\times,\z), H_{n-i-j-2}(\GL_{n-2}(F),\z)\Big)\bigg)_\kk.\\
\end{array}
\]

Let $t_1\in \ker \Big(H_n(\GL_{n-1}(F),\kk) \arr H_{n}(\GL_n(F), \kk)\Big)$. Then $t=(t_1,0,0,0)$ is in the kernel of 
\[
\begin{array}{l}
\inc_\ast\colon H_n(\fff\times \GL_{n-1}(F),\kk) \arr H_n(\GL_{n}(F),\kk).
\end{array}
\]
Thus there exists $u=(u_1, \dots, u_{11})\in H_n(\fff^2\times \GL_{n-2}(F),\kk)$ such that 
\[
\alpha(u)=t,
\]
where $\alpha:={\alpha_1}_\ast-{\alpha_2}_\ast$. Consider the complex
\[
\begin{array}{c}
\hspace{-3.3cm}
H_n(\fff\! I_2\times \GL_{n-2}(F),\kk)\oplus H_n(\fff^3\times \GL_{n-3}(F),\kk) \\
\hspace{2.5cm}
 \overset{\beta}{\larr} H_n(\fff^2\times \GL_{n-2}(F),\kk) \overset{\alpha}{\larr} H_n(\fff \times \GL_{n-1}(F),\kk),
\end{array}
\]
where $\fff I_2$ is the group of the non-zero multiples of the $(2\times 2)$ identity matrix and
\[
\beta=(\inc_\ast, {\sigma_1}_\ast-{\sigma_2}_\ast+{\sigma_3}_\ast)
\]
 with
$\diag(a,b,c, A)\overset{\sigma_1}{\mt} \diag(b,c, a, A)$,  $\diag(a,b,c, A)\overset{\sigma_2}{\mt} \diag(a,c, b, A)$ and 
$\diag(a,b,c, A)\overset{\sigma_3=\inc}{\mt} \diag(a,b,c, A)$.

Let $H_n(\fff I_2\times \GL_{n-2}(F),\kk)\simeq V_1\oplus V_2\oplus V_3$, where
\[
\begin{array}{l}
V_1  =H_n(\GL_{n-2}(F),\kk),\\
V_2  = \bigoplus_{i=1}^n H_i(F^\times I_2,\kk) \otimes H_{n-i}(\GL_{n-2}(F),\kk),\\
V_3 = \bigoplus_{i=1}^{n-2}\tors(H_i(F^\times I_2,\z), H_{n-i-1}(\GL_{n-2}(F),\z))_\kk.
\end{array}
\]
Since $\beta|_{H_n(\fff I_2\times \GL_{n-2}(F),\kk)}=\inc_\ast$, we have
\[
\beta((u_1,u_2,u_6), 0)=(u_1, u_2, u_2, 0,0, u_6,u_6, 0, 0, 0, 0).
\]
Observe that
\[
u-\beta((u_1,u_2,u_6), 0)=(0,0, u_3-u_2, u_4,u_5, 0, u_7-u_6, u_8,u_9, u_{10}, u_{11})
\]
and 
\[
\alpha(u-\beta((u_1,u_2,u_6), 0))=t.
\]
Thus  from the beginning we may assume that $u_1=u_2=u_6=0$. 

Let $W$ be the following summand of $H_n(\fff^3\times \GL_{n-3}(F),\kk)$:
\[
W=\bigoplus_{\underset{i,j>0}{i+j\geq 3}}W_{i,j}
=\bigoplus_{\underset{i,j>0}{i+j\geq 3}} H_i(F_2^\times,\kk) \otimes H_j(F_3^\times,\kk) \otimes H_{n-i-j}(\GL_{n-3}(F),\kk).
\]
The restriction of $\beta$ on $W_{i,j}$ factors through $U_3\oplus U_5$ and 
\begin{align*}
\beta|_{W_{i,j} }\colon W_{i,j} &\arr U_3\oplus U_5 \se H_n(\fff^2\times \GL_{n-2}(F),\kk), \\
x\otimes y\otimes z& \mt \Big((-{\sigma_{2}}_\ast+{\sigma_{3}}_\ast)(x\otimes y\otimes z), x\otimes y\otimes \inc_\ast(z)\Big).
\end{align*}
By homological stability, $H_{n-i-j}(\GL_{n-3}(F),\kk)\arr H_{n-i-j}(\GL_{n-2}(F),\kk)$ is an isomorphism for $i+j\geq 3$. 
Thus we may assume $u_5=0$ (similar to the elimination of  $u_1,u_2, u_6$).

The restriction of $\beta$ on the  summand $W'=F_2^\times\otimes F_3^\times\otimes H_{n-2}(\GL_{n-3}(F),\kk)$
of $H_n(\fff^3\times \GL_{n-3}(F),\kk)$ factors through $U_3\oplus U_4'$ and
\begin{align*}
\beta|_{W'}\colon W' &\arr U_3\oplus U_4'\se H_n(\fff^2\times \GL_{n-2}(F),\kk), \\
a\otimes b\otimes z &\mt \Big((-{\sigma_{2}}_\ast+{\sigma_{3}}_\ast)(a\otimes b\otimes z),a\otimes b\otimes z\Big).
\end{align*}
Thus we may assume that $u_4'=0$. The restriction of $\beta$ on the summand 
\[
\begin{array}{c}
W''=\bigoplus_{i=1}^{n-2}\tors(H_i(F_2^\times,\z), H_{n-i-1}(F_3^\times,\z))_\kk
\end{array}
\]
of $H_n(\fff^3\times \GL_{n-3}(F),\kk)$ factors through $U_7\oplus U_8$ and
\[
\begin{array}{c}
\beta|_{W''}\colon W'' \arr U_7\oplus U_8\se H_n(\fff^2\times \GL_{n-2}(F),\kk), \\
\hspace{-2.5 cm}
x\mt ((-{\sigma_2}_\ast+{\sigma_3}_\ast) (x), x).
\end{array}
\]
Hence we may assume $u_8=0$.  If $X$, $X'$ and $X''$ are the following summands 
\[
\begin{array}{l}
X =\bigoplus_{\underset{i,j>0}{i+j\leq n-2}}
H_i(F_2^\times,\z)\otimes \tors\Big(H_j(F_3^\times,\z), H_{n-i-j-1}(\GL_{n-3}(F),\z)\Big)_\kk,\\
X'=\bigoplus_{\underset{i,j>0}{i+j\leq n-2}}
\tors\Big(H_i(F_2^\times,\z), H_j(F_3^\times,\z)\otimes H_{n-i-j-1}(\GL_{n-3}(F),\z)\Big)_\kk\\
X''=\bigoplus_{\underset{i,j>0}{i+j\leq n-2}}
\tors\bigg(H_i(F_2^\times,\z), \tors\big(H_j(F_3^\times,\z), H_{n-i-j-2}(\GL_{n-3}(F),\z)\Big)\bigg)_\kk
\end{array}
\]
of $H_n(\fff^3\times \GL_{n-3}(F),\kk)$, then 
\[
\begin{array}{ll}
\beta|_X\colon X \arr U_3\oplus U_7\oplus U_9, & x\mt (\sigma_3(x),-\sigma_2(x), \inc_\ast(x)),\\
\beta|_{X'}\colon X' \arr U_7\oplus U_{10},&  y \mt ((-{\sigma_{2}}_\ast+{\sigma_{3}}_\ast)(y), \inc_\ast(y), \\
\beta|_{X''}\colon X'' \arr U_7\oplus U_{11}, &  z \mt ((-{\sigma_{2}}_\ast+{\sigma_{3}}_\ast)(z), \inc_\ast(z).
\end{array}
\]
Now by homological stability we may assume that $u_9=u_{10}=u_{11}=0$.
Thus $u$ finds the following form
\[
u=(u_{3}, u_4'', u_7) \in U_{3}\oplus U_4''\oplus U_7\se H_n(\fff^2\times \GL_{n-2}(F),\kk).
\]

Let $U_3=\bigoplus_{i=1}^n U_{3, i}$ and $u_3=(u_{3,i})_{1\leq i \leq n}$. If $T_3=\bigoplus_{i=1}^n T_{3, i}$, then
$\alpha$ factors as follow on the following summands:
\[
\begin{array}{l}
\alpha|_{U_{3, 1}}\colon  U_{3, 1} \arr T_1\oplus T_2' \se H_n(\fff\times \GL_{n-1}(F),\kk), \\
\hspace{2.4 cm} s\otimes z \mt (-s\cup z, s\otimes z),\\
\alpha|_{U_{3, i}}\colon U_{3, i} \arr T_1\oplus T_{3,i}\se H_n(\fff\times \GL_{n-1}(F),\kk),  \  2\leq i \leq n,\\
\hspace{2.3 cm} r\otimes v \mt (-r\cup v, r\otimes \inc_\ast(v)),\\
\alpha|_{U_7}\colon  U_7\arr T_1\oplus T_4\se H_n(\fff\times \GL_{n-1}(F),\kk), \\
\hspace{2.4 cm} u_7\mt (-{\alpha_2}_\ast(u_7), {\alpha_1}_\ast(u_7)).
\end{array}
\]
Moreover, the restriction of  $\alpha$ on $U_4''$  factors through  $T_2=T_2'\oplus T_2''$. More precisely, we have 
\[
\begin{array}{c}
\alpha|_{U_4''}\colon U_4'' \arr T_2=T_2'\oplus T_2''\se H_n(\fff\times \GL_{n-1}(F),\kk), \\
a\otimes b\otimes \{c_1,\dots, c_{n-2}\}\mt t=(t', t''),
\end{array}
\]
where 
\[
\begin{array}{rl}
t &\!\!\!\!=-\frac{(-1)^{n-3}}{(n-3)!}
\Big(b \otimes {\bf c}(\diag(a, I_{n-2}),\diag(1, C_{1, n-2}), \dots ,\diag(1, C_{n-2, n-2}))\\
 &\ \ \ \ \ \ \ \ \ \ \  +a \otimes {\bf c}(\diag(b, I_{n-2}),\diag(1, C_{1, n-2}), \dots ,\diag(1, C_{n-2, n-2}))\Big)\\
&\!\!\!\!=-\frac{-(n-2)}{(n-2)!}
\Big(b \otimes {\bf c}(\diag(C_{1, n-2}, 1), \dots ,\diag(C_{n-2, n-2}, 1), \diag(I_{n-2}), a)\\
 &\ \ \ \ \ \ \ \ \ \ \  +a \otimes {\bf c}(\diag(C_{1, n-2}, 1), \dots ,\diag(C_{n-2, n-2}, 1), \diag(I_{n-2}), b)\Big)\\
 &\!\!\!\!=-\frac{1}{(n-2)!}
\Big(b \otimes {\bf c}(\diag(C_{1, n-2}, 1), \dots ,\diag(C_{n-2, n-2}, 1), \diag(I_{n-2}, a^{-(n-2)}))\\
 &\ \ \ \ \ \ \ \ \ \  +a \otimes {\bf c}(\diag(C_{1, n-2}, 1), \dots ,\diag(C_{n-2, n-2}, 1), \diag(I_{n-2}, b^{-(n-2)}))\Big)\\
&\!\!\!\! =-\frac{1}{(n-2)!}\Big(b \otimes [c_1, \dots, c_{n-2},a] + a \otimes [c_1, \dots, c_{n-2},b]\\
&\ \ \ \ \ \ \ \  -b \otimes{\bf c}(\diag(C_{1, n-2}, 1), \dots ,\diag(C_{n-2, n-2}, 1), \diag(aI_{n-2}, 1))\\
&\ \ \ \ \ \ \ \ -a \otimes{\bf c}(\diag(C_{1, n-2}, 1), \dots ,\diag(C_{n-2, n-2}, 1), \diag(bI_{n-2}, 1))\Big)\\
t' &\!\!\!\!=\frac{(-1)^{n-2}}{(n-2)!}\inc_\ast\Big(b \otimes{\bf c}(aI_{n-2}, C_{1, n-2}, \dots, C_{n-2, n-2})\\
&\ \ \ \ \ \ \ \ \ \ \ \ +a \otimes{\bf c}(bI_{n-2}, C_{1, n-2}, \dots, C_{n-2, n-2})\Big),\\
t''&\!\!\!\!= -b \otimes \{a, c_1, \dots, c_{n-2}\} -a \otimes \{b, c_1, \dots, c_{n-2}\}.
\end{array}
\]
These would imply that  $u_{3,i}=0$ for all $2\leq i\leq n$ (here we use the homological stability
$H_{n-i}(\GL_{n-2}(F),\kk) \simeq H_{n-i}(\GL_{n-1}(F),\kk)$). Moreover, since  the map ${\alpha_1}_\ast|_{U_7}\colon U_7\arr T_4$ 
is induced by the homological stability  $H_{n-i-1}(\GL_{n-2}(F),\z)\simeq H_{n-i-1}(\GL_{n-1}(F),\z)$ for $1\leq i\leq n-2$,  
we have $u_7=0$.   Thus we may assume that 
\[
u=(u_{3,1}, u_4'') \in U_{3,1}\oplus U_4''\se H_n(\fff^2\times \GL_{n-2}(F),\kk).
\]

Let $u_{3,1}=\sum s\otimes z$, $u_4''=\sum a\otimes b\otimes \{c_1,\dots, c_{n-2}\}$. Then
\[
\alpha(u_{3,1}, u_4'')=(t_1', (t_2', t_2''), t_3', t_4')=(t_1,0,0,0),
\]
where
\[
\begin{array}{l}
t_1' =t_1=-\sum s\cup z,\\
t_2' =0=\sum  s\otimes z +\frac{(-1)^{n-2}}{(n-2)!}\sum \Big(b\otimes {\bf c}(aI_{n-2}, C_{1,n-2},\dots,C_{n-2,n-2})\\
\hspace{4.9cm}+a\otimes {\bf c}(bI_{n-2}, C_{1,n-2},\dots, C_{n-2,n-2})\Big),\\
t_2''=0=-\sum  \Big(b\otimes \{a, c_1,\dots, c_{n-2}\}+a\otimes \{b, c_1,\dots, c_{n-2}\}\Big),\\
t_3'=0=t_4'=0.
\end{array}
\]
 Therefore 
\[
\begin{array}{c}
t_1\!=\frac{(-1)^{n-2}}{(n-2)!}\!\sum\!\Big( {\bf c}(\diag(I_{n-2}, b), \diag(aI_{n-2},1), C_{1, n-1}, \dots ,C_{n-2, n-1})\\
\ \ \ \ \ \ \ \ \ \ \ \ \ \ \ \ \ \ \ \ + {\bf c}(\diag(I_{n-2},a), \diag(bI_{n-2},1),  C_{1, n-1}, \dots ,C_{n-2, n-1})\Big).
\end{array}
\]
This shows that $\kappa_n$ is surjective. 
\end{proof}

Based on the above theorem we make the following conjectures.

\begin{conj}\label{conj:exact}
{\rm (i)} For any $n$, the natural map
\[
\begin{array}{c}
\kappa_n\colon\!\B_n(F)\nn \!\arr\! \ker\Big(H_n(\GL_{n-1}(F),\zn)\! \arr\!
H_n(\GL_{n}(F),\zn)\Big)
\end{array}
\]
is surjective.
\par {\rm (ii)} For any $n\geq 3$, the sequence
\[
\begin{array}{c}
H_n(\fff^2\!\!\times\! \GL_{n-2}(F),\zn)\!\!\overset{{\alpha_1}_\ast-{\alpha_2}_\ast}{-\!\!\!-\!\!\!\larr} \!\!
H_n(\fff\!\!\times\! \GL_{n-1}(F),\zn)\\
\! \overset{\inc_\ast}{\larr}\! H_n(\GL_{n}(F),\zn) \!\arr \!  0
\end{array}
\]
is exact.
\end{conj}

\begin{cor}
If Conjecture $\ref{conj:exact}{\rm (ii)} $ holds for all $n\geq 3$, then $\kappa_n$ is surjective for all $n\geq 1$.
\end{cor}
\begin{proof}
The claim is trivial for $n=1,2$. So let $n\geq 3$. Let, by induction, the claim  holds for any $m <n$. Then $\kappa_m$ 
is surjective and its image is $(m-1)$-torsion. This implies that the map
\[
\begin{array}{c}
H_m(\GL_{m-1}(F),\zmm) \arr H_m(\GL_{m}(F),\zmm)
\end{array}
\]
is injective for any $m< n$. Now the claim follows from the previous theorem.
\end{proof}

\begin{rem}\label{rem:n-2}
(i) Conjecture \ref{conj:exact}(ii) holds for $n=3,4$ \cite[Corollary 3.5]{mirzaii-2008}, \cite[Theorem 2]{mirzaii2008}.
\par (ii) Conjecture \ref{conj:exact}(ii)  follows from Conjecture \ref{conj1} below, with $\zn$-coefficients. In fact by 
Conjecture \ref{conj1}, the groups $E_{0,n}^2(n,\zn)$ and $E_{1, n}^2(n,\zn)$ are trivial from which the exactness of 
the desired sequence follows.
\par (iii) Galatius, Kupers and Randal-Williams showed that Conjecture \ref{conj:exact} holds if we replace
$\zn$ with a field $\kk$ such that $(n-1)!\in \kk^\times$ (Theorem  \ref{G-K-R}).
\end{rem}

\begin{cor}\label{cor:n=4}
For any $n\leq 4$, $\kappa_n$ is surjective. In particular the natural maps
$H_3(\GL_2(F),\z\half) \arr H_3(\GL_3(F),\z\half)$ and $H_4(\GL_3(F),\z\Big[\frac{1}{6}\Big])\arr
H_4(\GL_4(F),\z\Big[\frac{1}{6}\Big])$ are injective.
\end{cor}
\begin{proof}
Clearly $\kappa_1$ and $\kappa_2$ are surjective. 
Conjecture \ref{conj:exact}(ii)  holds for $n=3$ by \cite[Corollary~3.5 (ii)]{mirzaii-2008} and for $n=4$ by \cite[Theorem~2]{mirzaii2008}.
Now the claim follows from Theorem \ref{thm:phi}.
\end{proof}

\begin{rem}\label{rem:k3}
(i) The case $n=3$ of the above corollary already was known. In fact
in \cite[Remark~3.5]{mirzaii2012}, $\kappa_3$ is defined and  is shown to be surjective. 
\par (ii) It is an open problem whether the map 
\[
H_3(\GL_2(F),\z)\arr H_3(\GL_3(F),\z)
\]
is injective (see \cite[Theorem~4.4]{mirzaii2015}). 
Generalising this, one might ask if, in fact, the map 
\[
\begin{array}{c}
H_n(\GL_{n-1}(F),\zn)\arr H_n(\GL_n(F),\zn)
\end{array}
\]
is injective?  Up to Conjecture \ref{conj:exact}(i) this is equivalent to the triviality of $\kappa_n$.
The answer to this question is positive when $\fff$ is divisible  (Theorem \ref{thm:div}) or when 
$F$ is real closed (Theorem \ref{real}).
\end{rem}

\section{{\bf The cokernel of \texorpdfstring{$H_{n+1}(\GL_{n-1}(F),\z) \arr H_{n+1}(\GL_{n}(F),\z)$}{Lg}}}

For any positive integers $n$ and $r$, let 
\[
H_\GL^{r}(F, n):=H_n(\GL_r(F),\z)/H_n(\GL_{r-1}(F),\z).
\]
By Theorem \ref{suslin},  $H^{r}_\GL(F, n)=0$ for $r>n$ and $H_\GL^{n}(F, n)\simeq K_n^M(F)$. By Theorem~\ref{G-K-R},
\[
H_\GL^{n}(F, n+1)_\kk\simeq H_{n+1}(\GL_{n}(F),\GL_{n-1}(F),\kk),
\]
where $\kk$ is a field such that $(n-1)!\in \kk^\times$. 

From the inclusions of groups $\GL_{n-1}(F)\harr \fff\times \GL_{n-1}(F) \harr \GL_n(F)$
we obtain the exact sequence
\[
\frac{H_{n+1}(\fff\!\!\times\! \GL_{n-1}(F),\!\z)}{H_{n+1}(\GL_{n-1}(F),\!\z)} 
\!\overset{\Phi}{\larr}\! 
H_\GL^{n}(F, n+1) \!\arr\! \frac{H_{n+1}(\GL_n(F),\z)}{H_{n+1}(\fff\!\!\times\! \GL_{n-1}(F),\!\z)}\!\arr\! 0.
\]
Let $H_{n+1}(\fff\times \GL_{n-1}(F),\z)=\bigoplus_{i=1}^6 S_i$, where
\[
\begin{array}{l}
S_1 =H_{n+1}(\GL_{n-1}(F),\z),\\
S_2  =\fff \otimes H_{n}(\GL_{n-1}(F),\z),\\
S_3=H_2(\fff,\z) \otimes H_{n-1}(\GL_{n-1}(F),\z) \\
S_4 = \bigoplus_{i=3}^{n+1} H_i(\fff,\z) \otimes H_{n-i+1}(\GL_{n-1}(F),\z),\\
S_5=\tors(\fff, H_{n-1}(\GL_{n-1}(F),\z)),\\
S_6=\bigoplus_{i=2}^n\tors(H_i(\fff,\z), H_{n-i}(\GL_{n-1}(F),\z)).\\
\end{array}
\]
Clearly $\Phi(S_1)=0$.
By homological stability 
\[
H_{n-i+1}(\GL_{n-2}(F),\z)\simeq H_{n-i+1}(\GL_{n-1}(F),\z)
\]
for $3\leq i\leq n+1$  and thus  $\Phi(S_4)=0$.  Furthermore, for any $2\leq i\leq n$, 
the homological stability  $H_{n-i}(\GL_{n-2}(F),\z)\simeq H_{n-i}(\GL_{n-1}(F),\z)$ induces
the isomorphism
\[
\tors(H_i(\fff\!,\!\z),\! H_{n-i}(\GL_{n-2}(F),\z))\!\simeq\! \tors(H_i(\fff\!,\!\z),\! H_{n-i}(\GL_{n-1}(F),\z)).
\]
Thus we may assume that $\Phi(S_6)=0$. Moreover,
\[
\begin{array}{l}
\Phi \Big(\im(\fff \otimes H_{n}(\GL_{n-2}(F),\z)\overset{\id_\fff\otimes \inc_\ast}{-\!\!\!-\!\!\!-\!\!\!-\!\!\!-\!\!\!-\!\!\!-\!\!\!-\!\!\!\larr} S_2)\Big)=0,\\
\Phi \Big(\im(H_2(\fff,\z) \otimes H_{n-1}(\GL_{n-2}(F),\z)
\overset{\id_{H_2(\fff,\z)}\otimes \inc_\ast}{-\!\!\!-\!\!\!-\!\!\!-\!\!\!-\!\!\!-\!\!\!-\!\!\!-\!\!\!-\!\!\!-\!\!\!\larr} S_3)\Big)=0.
\end{array}
\]
Thus the above exact sequence finds the following form
\begin{equation*}
\fff \otimes H_\GL^{n-1}(F, n) \oplus \bigwedge{}_\z^2\fff\otimes K_{n-1}^M(F)\oplus \tors(\fff, H_{n-1}(\GL_{n-1}(F),\z))  
\end{equation*}
\[
\!\overset{\Phi}{\larr}\! H_\GL^{n}(F, n+1) \larr \frac{H_{n+1}(\GL_n(F),\z)}{H_{n+1}(\fff\times \GL_{n-1}(F),\z)}\arr 0,
\]
where $K_{n-1}^M(F)\!\simeq\! H_{n-1}(\GL_{n-1}(F),\z)/H_{n-1}(\GL_{n-2}(F),\z)$ (Theorem~\ref{suslin}).

Now in the above exact sequence, replace the coefficients $\z$ with $\kk:=\zn$. 
By Corollary \ref{cor:dec},  
\[
H_{n-1}(\GL_{n-1}(F),\kk)\simeq \im(H_{n-1}(\GL_{n-2}(F),\kk))\oplus K_{n-1}^M(F)_\kk.
\] 
Since $\Phi \Big(\im(\tors(\fff, H_{n-1}(\GL_{n-2}(F),\z))_\kk\overset{\tors(\id_\fff, \inc_\ast)}{-\!\!\!-\!\!\!-\!\!\!-\!\!\!-\!\!\!-\!\!\!-\!\!\!-\!\!\!-\!\!\!\larr} 
{(S_5)}_\kk)\Big)=0$, we obtain the exact sequence
\begin{equation*}
\fff \otimes H_\GL^{n-1}(F, n)_\kk
\oplus \bigwedge{}_\z^2\fff\otimes K_{n-1}^M(F)_\kk\oplus \tors(\fff, K_{n-1}^M(F))_\kk
\end{equation*}
\[
\!\overset{\Phi}{\larr}\! H_\GL^{n}(F, n+1)_\kk \larr \frac{H_{n+1}(\GL_n(F),\kk)}{H_{n+1}(\fff\times \GL_{n-1}(F),\kk)}\arr 0.
\]
This suggests that to study $H_\GL^{n}(F, n+1)\nn$, up to induction, we need to study the cokernel of
\[
\begin{array}{c}
H_{n+1}(\fff\times \GL_{n-1}(F),\zn) \arr H_{n+1}(\GL_n(F),\zn).
\end{array}
\]
We will do this in the next two sections.

For now, we look at  $H_\GL^{n}(F, n+1)$ for $n=1$ and $n=2$.  Clearly
\[
\begin{array}{l}
H_\GL^1(F, 2)= H_2(\GL_1(F),\z)/H_2(\GL_0(F),\z)\simeq \bigwedge_\z^2\fff.
\end{array}
\]
But $H_\GL^2(F, 3)$ has much richer structure.  

\begin{prp}\label{mirzaii}
For any infinite field $F$, we have the exact sequence
\[
H_3(\SL_2(F),\z)_\fff  \arr  H_\GL^2(F, 3) \arr \fff\otimes K_2^M(F)\arr K_3^M(F)/2 \arr 0.
\] 
Moreover, we have the decomposition
\begin{equation*}
\begin{array}{c}
H_\GL^2(F, 3) \half\simeq K_3^\ind(F)[\frac{1}{2}] \oplus \fff\otimes K_2^M(F)[\frac{1}{2}].
\end{array}
\end{equation*}
\end{prp}
\begin{proof}
By studying  the relative Lyndon/Hochschild-Serre spectral sequence \cite[Theorem 1.4]{essert2013} associated to the extension
\[
1 \arr \SL_2(F)  \arr  \GL_2(F)  \overset{\det}{\larr}   \fff \arr 1,
\]
relative to the subgroup $\GL_1(F) \se \GL_2(F)$, we get the exact sequence
\[
H_3(\SL_2(F),\z)_\fff  \arr  H_\GL^2(F, 3) \arr H_1(\fff, H_2(\SL_2(F),\z)) \arr 0.
\] 
The inclusion $\SL_2 \larr \SL_3$ induces the short exact sequence
\begin{gather*}\label{hutch-tao1}
0 \arr H_1(\fff, H_2(\SL_2(F),\z))\arr H_1(\fff, H_2(\SL_3(F),\z)) \arr K_3^M(F)/2 \arr 0
\end{gather*}
\cite[Theorem 3.2]{H-T2009}. Since $H_2(\SL_3(F),\z)\simeq K_2(F)$ \cite[2.1]{sah1989},
$\fff$ acts trivially on $H_2(\SL_3(F),\z)$. Thus
\[
H_1(\fff, H_2(\SL_3(F),\z))\simeq \fff \otimes K_2^M(F).
\]
Moreover, the map  $\fff \otimes K_2^M(F)\simeq H_1(\fff, H_2(\SL_3(F),\z)) \arr K_3^M(F)/2$ is induced by the natural map 
$\fff \otimes K_2^M(F) \arr K_3^M(F)$ (see the proof of \cite[Theorem 3.2]{H-T2009}). Thus we have the exact sequence
\[
H_3(\SL_2(F),\z)_\fff  \arr  H_\GL^2(F, 3) \arr \fff\otimes K_2^M(F)\arr K_3^M(F)/2 \arr 0.
\]
This proves the first claim.

By \cite[Theorem~6.1]{mirzaii-2008}, the map $H_{3}(\SL_{2}(F), \zzz)_\fff\arr H_\GL^2(F, 3) \half$
is injective. Moreover, $H_{3}(\SL_{2}(F), \z\half)_\fff \simeq K_3^\ind(F)\half$ by \cite[Proposition~6.4]{mirzaii-2008} or
\cite[Theorem~3.7]{mirzaii2012}. Thus we have the exact sequence
\begin{equation*}\label{H23}
\begin{array}{c}
0\arr K_3^\ind(F)[\frac{1}{2}]\arr H_\GL^2(F, 3) \half\arr \fff\otimes K_2^M(F)[\frac{1}{2}]\arr 0.
\end{array}
\end{equation*}

To finish the proof, we should show that the above short exact sequence  splits. Let $\phi$ be the composite
\[
\fff \otimes K_2^M(F) \overset{\id_\fff \otimes \iota_2}{-\!\!\!-\!\!\!-\!\!\!-\!\!\!\larr}
\fff \otimes H_2(\GL_2(F),\z) \overset{\cup}{\arr} H_3(\fff \times \GL_2(F),\z) 
 \]
 \[
 \overset{\tau_\ast}{\larr} H_3(\GL_2(F),\z)\arr H_\GL^2(F, 3),
\]
where $\tau\colon \fff \times \GL_2(F) \arr \GL_2(F)$ is given by $(a, A) \mt aA$. By \cite[Lemma~3.2 (ii)]{mirzaii2015}, 
the composite 
\[
\fff \otimes K_2^M(F) \overset{\phi}{\arr} H_\GL^2(F, 3)  \arr \fff \otimes K_2^M(F)
\]
coincides with multiplication by $2$. Now it is easy to construct a splitting map.
\end{proof}

In the next section we will describe $H_\GL^2(F, 3)$ in a different way (Proposition \ref{prp:n=2}).

\begin{rem}\label{sl2-k3}
For any field $F$ there is a natural map 
\[
H_3(\SL_2(F),\z)_\fff \arr K_3^\ind(F)
\]
(see \cite[\S 1]{mirzaii2015} for its construction). It is an open question, asked by Suslin, whether this map 
is an isomorphism \cite[Question 4.4]{sah1989}. Hutchinson and Tao have proved that it is surjective \cite[Lemma 5.1]{H-T2009}.
For more on this question see \cite[Theorem 4.4]{mirzaii2015}.
\end{rem}

\section{{\bf The cokernel of \texorpdfstring{$H_{n+1}(\fff\!\times \GL_{n-1}(F),\z) \arr H_{n+1}(\GL_{n}(F),\z)$}{Lg}}}\label{sp}

To study the quotient group $H_{n+1}(\GL_n(F),\z)/H_{n+1}(\fff\times \GL_{n-1}(F),\z)$, we look at a certain 
spectral sequence introduced and studied in \cite[\S 2.2]{elbaz1998}, \cite[Section 3]{mirzaii-2008} and \cite[Section 5]{mirzaii2008}.

Let $\kk$ be a commutative ring. For any $l\geq 0$,  let  $D_l(F^n)$ be the free $\kk$-module with a basis consisting of 
$(l+1)$-tuples $(\lan w_0\ran, \dots, \lan w_l\ran)$, where $0\neq w_i\in F^n$,  $\lan w_i\ran=Fw_i$ and $\lan w_i \ran \neq \lan w_j \ran$ 
when $i\neq j$. Set $D_{-1}(F^n):=\kk$. The group $\GL_n(F)$ acts naturally on  $D_l(F^n)$ on the left as follow:
\[
g.(\lan w_0\ran, \dots, \lan w_l\ran):=(\lan gw_0\ran, \dots, \lan gw_l\ran).
\]
We consider $D_{-1}(F^n)=\kk$ as  a trivial $\GL_n(F)$-module. If it is necessary we convert these left actions to right actions
by the definition $m.g:=g^{-1}.m$.

Let define $\partial_0\colon D_0(F^n) \arr D_{-1}(F^n)=\kk$ by $\sum_i n_i(\lan w_i\ran) \mt \sum_i n_i$.  For $l\geq 1$,
we define the $l$-th differential operator $\partial_l\colon D_l(F^n) \arr D_{l-1}(F^n)$, 
as an alternating sum of face operators which throw away the $i$-th component of generators.

For any integer $l\geq 0$, set $M_l=D_{l-1}(F^n)$. It is easy to see that the complex of $\GL_n(F)$-modules
\[
M_\bullet: \ \ \ \  0 \leftarrow M_0 \leftarrow M_1 \leftarrow \cdots \leftarrow M_l \leftarrow  \cdots
\]
is exact (see the proof of \cite[Lemma~2.2]{suslin1985}). 

Take a projective resolution $P_\bullet \arr  \z$ of $\z$ over $\GL_n(F)$. From the double complex 
$M_\bullet \otimes _{\z[\GL_n(F)]} P_\bullet$ 
we obtain the first quadrant spectral  sequence converging  to zero with $E_{\bullet,\bullet}^1$-terms
\begin{gather*}
{\eee}_{p, q}^1(n,\kk)=\begin{cases}
H_q(\fff^{p} \times \GL_{n-p}(F),\kk)& \text{if $0 \le p \le 2$}\\
 H_q(\GL_n(F), M_{p})& \text{if $p \ge 3,$} \end{cases}
\end{gather*}
(see \cite[Section~3]{mirzaii-2008}, \cite[Section~5]{mirzaii2008}). It is easy to see that $d_{1, q}^1(n,\kk)=\inc_\ast$. In particular
\[
\eee_{0,q}^2(n,\kk)=\frac{H_q(\GL_n(F),\kk)}{H_q(\fff\times \GL_{n-1}(F),\kk)}\cdot
\]
Moreover, $d_{2, q}^1(n,\kk)={\alpha_1}_\ast-{\alpha_2}_\ast$ which is discussed  in the exact sequence (\ref{exactness}). Now we 
would like to describe $\eee_{3, q}^1(n, \kk)$. The orbits of the action of  $\GL_n(F)$ on $M_3=D_2(F^n)$ are represented by
\[
w_1=(\lan e_1 \ran, \lan e_2 \ran, \lan e_3 \ran) \ \ \text{and} \ \ w_2=(\lan e_1 \ran, \lan e_2 \ran, \lan e_1+ e_2\ran).
\]
Thus
\begin{align*}
\eee_{3, q}^1(n, \kk) & \simeq H_q(\stabe_{\GL_n(F)}(w_1),\kk) \oplus H_q(\stabe_{\GL_n(F)}(w_2),\kk)\\
                               &\simeq H_q(\fff^3\times \GL_{n-3}(F),\kk) \oplus H_q(\fff I_2 \times \GL_{n-2}(F),\kk),
\end{align*}
where $\fff I_2:=\{aI_2: a\in \fff\}$ (see \cite[Theorem 1.11]{N-S1990}). Moreover, 
\begin{align*}
d_{3, q}^1(n,\kk)|_{H_q(\fff^3\times \GL_{n-3}(F),\kk)}&={\sigma_1}_\ast-{\sigma_2}_\ast+{\sigma_3}_\ast,\\
d_{3, q}^1(n,\kk)|_{H_q(\fff I_2 \times \GL_{n-2}(F),\kk)}& =\inc_\ast.
\end{align*}
Note that $\diag(a,b,c, A)\overset{\sigma_1}{\mt} \diag(b,c, a, A)$,  $\diag(a,b,c, A)\overset{\sigma_2}{\mt} \diag(a,c, b, A)$ 
and $\diag(a,b,c, A)\overset{\sigma_3=\inc}{\mt}\diag(a,b,c, A)$.

When $\kk$ is a field, with a similar method as in \cite[Lemma~4.2]{mirzaii2005}, one can show that $\eee_{p,q}^2(n,\!\kk)\!=\!0$ for 
$p=0,1$ and $q\leq n-1$. Moreover, by Theorem~\ref{G-K-R}, this is also true for $q=n$ if $(n-1)!\in \kk^\times$.  

\begin{conj}\label{conj1}
Let $n\geq 3$ and let $\kk$ be either a field or a subring of $\q$.  Then for any $3\leq i\leq n+2$, $\eee_{i,n-i+2}^2(n,\kk)=0$. 
\end{conj}

\begin{rem}
(i) Since the spectral sequence converges to zero, it follows from Conjecture \ref{conj1} that $\eee_{1,n}^2(n,\kk)=0$.
For $\kk=\zn$ this is equivalent to Conjecture \ref{conj:exact}(ii). Thus Conjecture \ref{conj1} implies Conjecture~\ref{conj:exact}.
\par (ii) Moreover, it follows from Conjecture \ref{conj1} that the differential 
\[
d_{2,n}^2(n,\kk)\colon \eee_{2,n}^2(n,\kk)\arr \eee_{0,n+1}^2(n,\kk)
\]
is surjective. For $\kk=\zn$, this differential is used in the construction of the map $\chi_{n}$ of Theorem B.
In fact, as will be clear from the definition of $\chi_n$, Conjecture \ref{conj1} implies Conjecture \ref{conj:chi} below.
\par (iii)  Conjecture  \ref{conj1} is known for $n=3, 4$ \cite[\S 3]{mirzaii-2008}, \cite[\S 6]{mirzaii2008}. 
\end{rem}

Now we study the group  $H_{n+1}(\GL_n(F),\z)/H_{n+1}(\fff\times \GL_{n-1}(F),\z)$ for $n=1$ and $n=2$.
Clearly $H_{2}(\GL_1(F),\z)/H_{2}(\fff\times \GL_{0}(F),\z)$ is trivial. The group $H_{3}(\GL_2(F),\z)/H_{3}(\fff\times \GL_{1}(F),\z)$ 
is more interesting and is connected to the Bloch group of  $F$.

\begin{prp}\label{prp:n=2}
For any infinite field $F$, 
\[
H_{3}(\GL_2(F),\z)/H_{3}(\fff\times \GL_{1}(F),\z)\simeq \B_2(F).
\]
In particular, we have the exact sequence
\[
\begin{array}{c}
\tors(\mu(F),\mu(F))\oplus \fff \otimes H_\GL^1(F,2) \arr H_\GL^2(F,3)\arr \B_2(F)\arr 0
\end{array}
\]
where $\mu(F)$ is the group of roots of unity in $F$.
\end{prp}
\begin{proof}
By studying the spectral sequence $E_{\bullet,\bullet}^1(2,\z)$, 
Suslin showed that there is a natural map $H_3(\GL_2(F), \z) \arr B(F)$ such that the sequence
\begin{equation}\label{sus-exact}
H_3(\GM_2(F),\z)\arr H_3(\GL_2(F), \z) \arr B(F)\arr 0
\end{equation}
is exact, where $\GM_2(F)$ is the subgroup of monomial matrices in $\GL_2(F)$ \cite[Theorem 2.1]{suslin1991}.
Note that  $\GM_2(F)\simeq (\fff \times \fff)\rtimes \Sigma_2=T_2(F) \rtimes \Sigma_2$,
where $\Sigma_2=\Bigg\{1:=I_2=
{\mtx 1 0 0 1}, \sigma:={\mtx 0 1 1 0} \Bigg\}$.

The Lyndon/Hochschild-Serre spectral sequence associated to the
extension $1 \arr T_2(F) \arr \GM_2(F) \arr \Sigma_2\arr 1$, i.e.
\[
\EE_{p,q}^2=H_p(\Sigma_2, H_q(T_2(F),\z))\Rightarrow H_{p+q}(\GM_2(F),\z),
\]
gives us a filtration of $H_3(\GM_2(F),\z)$,
\[
0=F_{-1}H_3(\GM_2(F),\z) \se \cdots \se F_3H_3(\GM_2(F),\z)=H_3(\GM_2(F),\z),
\]
such that
\[
\begin{array}{l}
\vspace{1.5 mm}
\EE^\infty_{0, 3}\simeq F_0H_3(\GM_2(F),\z)=H_3(T_2(F),\z)_{\Sigma_2},\\
\vspace{1.5 mm}
\EE^\infty_{1, 2} \simeq F_1H_3(\GM_2(F),\z)/F_0H_3(\GM_2(F),\z)\simeq \EE_{1,2}^2,\\
\vspace{1.5 mm}
\EE^\infty_{2, 1}\simeq F_2H_3(\GM_2(F),\z)/F_1H_3(\GM_2(F),\z)=0,\\
\EE^\infty_{3, 0}\simeq 
H_3(\GM_2(F),\z)/F_2H_3(\GM_2(F),\z)\simeq H_3(\Sigma_2,\z),
\end{array}
\]
(see \cite[Section~4]{M-M2015} for some details).
Thus
\begin{align*}
\EE_{1, 2}^2 \simeq F_2H_3(\GM_2(F),\z)/H_3(T_2(F),\z).
\end{align*}
Moreover, since the map $\GM_2(F) \arr \Sigma_2$ splits, we obtain the decomposition 
\[
H_3(\GM_2(F),\z)\simeq F_2H_3(\GM_2(F),\z)\oplus H_3(\Sigma_2,\z).
\]
The matrix $\sigma={\mtx 0 1 1 0}$ is conjugate in $\GL_2(F)$ to
${\mtx 1 1 0 {-1}} \in U_2(F)$, where
\[
U_2(F)=\Bigg\{{\mtx a b 0 d }: a, d\in \fff, b\in F\Bigg\}.
\]
Hence $\im (H_3(\Sigma_2,\z)) \se \im (H_3(U_2(F),\z))=\im (H_3(T_2(F),\z))$. Thus from the exact sequence
(\ref{sus-exact}) we obtain the exact sequence
\begin{equation}\label{exact-gl2}
\EE_{1, 2}^2 \arr {H_{3}(\GL_2(F),\z)}/{H_{3}(\fff\times \GL_{1}(F),\z)}\arr B(F)\arr 0.
\end{equation}
Observe that
\begin{align*}
\EE_{1, 2}^2& \simeq H_1(\Sigma_2, \fff\otimes \fff)\simeq  H_1(\Sigma_2, \mu(F)\otimes \mu(F))\\
& \simeq  H_1(\Sigma_2, \mu_{2^\infty}(F)\otimes \mu_{2^\infty}(F))\simeq  
\frac{(\mu_{2^\infty}(F)\otimes \mu_{2^\infty}(F))^{\Sigma_2}}{(1+\sigma)( \mu_{2^\infty}(F)\otimes \mu_{2^\infty}(F))}\\
& =( \mu_{2^\infty}(F)\otimes \mu_{2^\infty}(F))^{\Sigma_2}={}_2(\mu_{2^\infty}(F)\otimes_\z\mu_{2^\infty}(F))\\
& \simeq
\begin{cases}
0 & \text{if $\mu_{2^\infty}(F)$ is infinite or $\char(F)=\! 2$} \\
\z/2 &   \text{if $\mu_{2^\infty}(F)$ is finite and $\char(F)\neq 2$,}
\end{cases}
\end{align*}
(see \cite[Lemma 4.1 and page 5088]{M-M2015}). Recall that $\mu_{2^\infty}(F)$ is the group of
$2$-power roots of unity in $F$, i.e. $\mu_{2^\infty}(F)=\{a\in F: a^{2^n}=1\ \text{for some $n\geq 0$}\}$ and 
for an abelian group $A$, ${}_2A:=\{a\in A: 2a=0\}$.

Suslin also showed that there is a natural map $H_3(\GL_3(F), \z) \arr B(F)$ such that the sequence
\begin{equation}\label{sus-exact2}
H_3(\GM_2(F),\z)\oplus H_3(T_3(F),\z) \arr H_3(\GL_3(F), \z) \arr B(F)\arr 0
\end{equation}
is exact \cite[Proposition 3.1]{suslin1991} and the map $H_3(\GL_2(F), \z) \arr B(F)$ in (\ref{sus-exact})
factors through $H_3(\GL_3(F), \z)$. Now from this one can obtain the exact sequence
\[
0\arr T_F \arr H_3(\GL_3(F), \z)/L \arr B(F)\arr 0,
\]
where $L=\fff^{\otimes 3}\oplus \fff\otimes H_2(\fff, \z)\oplus H_3(\fff, \z)$ and 
$T_F$ sits in the following exact sequence
\[
0\arr \tors (\mu(F),\mu(F)) \arr T_F \arr H_1(\Sigma_2, \mu_{2^\infty}(F)\otimes \mu_{2^\infty}(F)) \arr 0.
\]
Moreover,
\[
H_3(\GL_3(F), \z)/L\simeq K_3^\ind(F)
\]
(see the proof \cite[Theorem 5.1]{M-M2015}). From these results we obtain the exact sequence
\[
0\arr H_1(\Sigma_2, \mu_{2^\infty}(F)\otimes \mu_{2^\infty}(F)) \arr K_3^\ind(F)/\tors (\mu(F),\mu(F))\arr B(F) \arr 0.
\]
Observe that $\EE_{1,2}^2\simeq H_1(\Sigma_2, \mu_{2^\infty}(F)\otimes \mu_{2^\infty}(F))$.
Now from the commutative diagram with exact rows
\[
\begin{tikzcd}
& \EE_{1, 2}^2 \ar[r]\ar[d, "="] & \displaystyle\frac{H_{3}(\GL_2(F),\z)}{H_{3}(\fff\times \GL_{1}(F),\z)}\ar[r]\ar[d] & B(F)\ar[r]\ar[d, "="] & 0\\
0 \ar[r]& \EE_{1, 2}^2 \ar[r]          &  K_3^\ind(F)/\tors (\mu(F),\mu(F))\ar[r]                                      & B(F) \ar[r]      & 0
\end{tikzcd}
\]
 it follows that 
\[
\frac{H_{3}(\GL_2(F),\z)}{H_{3}(\fff\times \GL_{1}(F),\z)}\simeq K_3^\ind(F)/\tors (\mu(F),\mu(F))\simeq \B_2(F).
\]
This proves the first claim.

To prove the second claim, note that the image of the map 
\[
\fff\otimes H_{2}(\GL_{1}(F),\z)\overset{\cup}{\larr} H_\GL^2(F,3)
\]
 is generated by the elements
${\bf c}(\diag(a,1), \diag(1,b), \diag(1,c))$, $a,b,c\in \fff$, and the image of 
\[
\begin{array}{c}
\bigwedge_\z^2\fff\otimes H_{1}(\GL_{1}(F),\z)\overset{\cup}{\larr}H_\GL^2(F,3)
\end{array}
\]
is generated by the elements
${\bf c}(\diag(d,1), \diag(e,1), \diag(1,f))$, $d,e,f\in \fff$. The conjugation by $\sigma={\mtx 0 1 1 0}$ on $\GL_2(F)$, i.e.
the map $i_\sigma: \GL_2(F)\arr \GL_2(F)$, $A\mt \sigma A\sigma^{-1}$, induces identity on $H_3(\GL_2(F), \z)$. 
This implies that
\begin{align*}
{\bf c}(\diag(a,1), \diag(1,b), \diag(1,c))& ={\bf c}(\diag(1,a), \diag(b,1), \diag(c,1))\\
& ={\bf c}(\diag(b,1), \diag(c,1), \diag(1,a)).
\end{align*}
Thus the images of $\fff\otimes H_{2}(\GL_{1}(F),\z)$ and 
$\bigwedge_\z^2\fff\otimes H_{1}(\GL_{1}(F),\z)$ coincide in $H_\GL^2(F,3)$. 
\end{proof}

\begin{rem}\label{rem:H3}
(i) Proposition \ref{prp:n=2} holds for any field with more than nine elements or more generally for any local domain where its
residue field has more than nine elements. The proof is the same, but for the exact sequences (\ref{sus-exact}) and (\ref{sus-exact2}) 
see \cite[Section 5]{mirzaii2017}.
\par (ii) Putting the exact sequences of Propositions \ref{mirzaii} and \ref{prp:n=2} in one diagram we get the commutative diagram
\[
\begin{tikzcd}[cramped]
\tors(\mu(F),\mu(F))\ar[d, "{(\id, 0)}"]\ar[r]  & H_3(\SL_2(F),\z)_\fff\ar[d]\ar[dr, two heads] &   \\
\tors(\mu(F),\mu(F))\!\oplus\!\fff\!\otimes H_{\GL}^1\!(F,2)\! \ar[r]\ar[dr, two heads, "\theta"] &\! H_{\GL}^2(F,3)\! \ar[r]\ar[d] &\! \B_2(F)\!\arr 0,\\
   & \fff \!\otimes K_2^M(F) \ar[d] &   \\
    & K_3^M(F)/2 \ar[d] &  \\ 
        & 0 &  
\end{tikzcd}
\]
where $\theta|_{\fff\otimes H_{\GL}^1(F,2)}(c\otimes(a\wedge b))=-a\otimes \{b,c\}+b\otimes\{a,c\}$ \cite[Lemma~3.2]{mirzaii2015},
$\theta|_{\tors(\mu(F),\mu(F))}=0$ and $\tors (\mu(F),\mu(F)) \arr H_3(\SL_2(F),\z)_\fff$  is induced by the map $\mu(F) \arr \SL_2(F)$
given by $a \mt \diag(a, a^{-1})$. Furthermore, the surjective map $H_3(\SL_2(F),\z)_\fff\arr \B(F)$ factors through $K_3^\ind(F)$.
\end{rem}

\section{{\bf The map \texorpdfstring{$\chi_n$}{Lg}}}\label{chi}

The following is Theorem B of the introduction.

\begin{thm}\label{thm:surj0}
Let  the natural map 
\[
\begin{array}{c}
H_m(\GL_{m-1}(F), \zmm) \arr H_m(\GL_{m}(F), \zmm)
\end{array}
\]
be injective for $m=n-1, n-2$. Then there is a natural map
\[
\begin{array}{c}
\chi_n\colon \B_n(F)\nn \larr \displaystyle\frac{H_{n+1}( \GL_{n}(F),\z\nn)}{H_{n+1}(\fff\times \GL_{n-1}(F),\z\nn)}\cdot
\end{array}
\]
If Conjecture  $\ref{conj1}$ holds with $\zn$ coefficients, then $\chi_n$ is surjective. 
\end{thm}
\begin{proof}
It is enough to construct $\chi_n$. The second part follows from the construction of $\chi_n$ below and an easy analysis of the spectral
sequence $E_{p,q}^1(n, \z\nn)$.

The map $\chi_1$ is the trivial map  and $\chi_2$ is the isomorphism 
\[
\B_2(F)\simeq H_3(\GL_2(F),\z)/H_3(\fff\times \GL_1(F),\z)
\] 
of Proposition~\ref{prp:n=2}. So let $n\geq 3$.

For simplicity, set $\kk=\zn$. We construct a surjective map 
\[
\chi_n'\colon \B_n(F)_\kk\two \eee_{2,n}^2(n,\kk). 
\]
Then $\chi_n$ is  defined as the composite of $\chi_n'$ with the differential  $d_{2, n}^2(n,\kk)$:
\[
\chi_n:=d_{2, n}^2(n,\kk)\circ\chi_n'.
\]

The group $\eee_{2,n}^2(n,\kk)$ is the homology of the complex
\[
\hspace{-2.5cm}
H_n(\fff\! I_2\times \GL_{n-2}(F),\kk)\oplus H_n(\fff^3\times \GL_{n-3}(F),\kk)  \overset{d_{3,n}^1(n,\kk)}{-\!\!\!-\!\!\!-\!\!\!\larr}
\]
\[
\hspace{2.5cm}
H_n(\fff^2\times \GL_{n-2}(F),\kk)  \overset{d_{2,n}^1(n,\kk)}{-\!\!\!-\!\!\!-\!\!\!\larr} H_n(\fff \times \GL_{n-1}(F),\kk).
\]
(Observe that this complex already appeared in the proof of Theorem \ref{thm:phi}(ii). In fact $d_{3,n}^1(n,\kk)=\beta$ 
and $d_{2,n}^1(n,\kk)=\alpha$.) Consider the decompositions
\[
\begin{array}{l}
H_n(\fff \times \GL_{n-1}(F),\kk) =\bigoplus_{i=1}^4T_i, \\
H_n(\fff^2 \times \GL_{n-2}(F),\kk) =\bigoplus_{i=1}^{11} U_i, 
\end{array}
\]
as in the proof of Theorem \ref{thm:phi}(ii)
with $T_2=T_2'\oplus T_2''$,  $U_4=U_4'\oplus U_4''$ and $U_3=\bigoplus_{i=1}^n U_{3,i}$.

Let $u=(u_1,\dots,u_{11})$ be in the kernel of $d_{2,n}^1(n,\kk)={\alpha_{1}}_\ast-{\alpha_{2}}_\ast$, where $u_4=(u_4', u_4'')$
and $u_3=(u_{3,i})_{1\leq i\leq n}$. As  in the proof of Theorem \ref{thm:phi}(ii), we can show that we may assume that
$u_1,u_2, (u_{3,i})_{2\leq i\leq n}, u_4', u_5, u_6, u_7,u_8, u_9,u_{10}$ and $u_{11}$ are trivial. Hence
\[
u=(u_{3,1}, u_4'')\in U_{3,1}\oplus U_4''\se H_n(\fff^2 \times \GL_{n-2}(F),\kk).
\]
If $u_{3,1}=\sum s\otimes z$ and $u_4''= \sum a\otimes b\otimes \{c_1,\dots, c_{n-2}\}$, then as in 
the proof of Theorem \ref{thm:phi}(ii),
\[
\begin{array}{c}
d_{2,n}^1(n,\kk)(u)\!=\!(t_1',(t_2',t_2''), t_3', t_4')\!=\!0\in H_n(\fff\! \times\! \GL_{n-1}(F),\kk)=\bigoplus_{i=1}^4\! T_i,
\end{array}
\] 
where
\[
\begin{array}{rl}
t_1'&\!\!\!\!=-\sum s\cup z=0,\\
t_2' &\!\!\!\!=0=\sum s\otimes z+ \frac{(-1)^{n-2}}{(n-2)!}\sum\Big(b \otimes{\bf c}(aI_{n-2}, C_{1, n-2}, \dots,C_{n-2, n-2})\\
&\hspace{4.7cm}+a \otimes{\bf c}(bI_{n-2}, C_{1, n-2}, \dots, C_{n-2, n-2})\Big),\\
t_2'' &\!\!\!\!=0=-\sum \Big(b \otimes \{a, c_1, \dots, c_{n-2}\} +a \otimes \{b, c_1, \dots, c_{n-2}\}\Big),\\
t_3'&\!\!\!\! =0=t_4'.
\end{array}
\]
These calculations show that the map 
\[
\chi_n'\colon \ker(\delta_2^{(n)})_\kk \larr  \eee_{3,n}^2(n,\kk),
\]
given by
\[
\begin{array}{c}
\sum a\otimes b\otimes \{c_1,\dots, c_{n-2}\} \mt (u_{3,1}, \sum a\otimes b\otimes \{c_1,\dots, c_{n-2}\})
\end{array}
\]
is surjective, where
\[
\begin{array}{rl}
u_{3,1} &\!\!\!\!=-\frac{(-1)^{n-2}}{(n-2)!}\sum\Big(b \otimes{\bf c}(aI_{n-2}, C_{1, n-2}, \dots ,C_{n-2, n-2})\\
& \hspace{2.1 cm}+a \otimes{\bf c}(bI_{n-2}, C_{1, n-2}, \dots, C_{n-2, n-2})\Big).
\end{array}
\]
To finish the proof, we should check that this map  factors through $\B_n(F)_\kk$ and for this we should show that
\[
\chi_n'\Big(b\otimes c\otimes  \{a, d_1, \dots, d_{n-3}\}+a\otimes c \otimes \{b, d_1, \dots, d_{n-3}\}+
a\otimes b\otimes \{c, d_1, \dots, d_{n-3}\}\Big)
\]
is trivial. Consider the following summands of $H_n(\fff^3\times \GL_{n-3}(F),\kk)$:
\[
\begin{array}{l}
W''' =F_1^\times\otimes F_2^\times \otimes F_3^\times \otimes H_{n-3}(\GL_{n-3}(F),\kk)\\
\ \ \ \ \ \!\ =W_1'''\oplus W_2''' ,\\
W_1''' =F_1^\times\otimes F_2^\times\otimes F_3^\times \otimes \im(H_{n-3}(\GL_{n-4}(F),\kk)),\\ 
W_2'''=F_1^\times\otimes F_2^\times\otimes  F_3^\times \otimes K_{n-3}^M(F)_\kk.
\end{array}
\]
Then $d_{3,n}^1(n,\kk)|_{W_2'''}$ factors through $U_4$, and 
\[
\begin{array}{c}
d_{3,n}^1(n,\kk)|_{W_2'''}\colon W_2''' \arr U_4=U_4'\oplus U_4'' \se H_n(\fff^2\times \GL_{n-2}(F),\kk),\\
a\otimes b\otimes c \otimes \{d_1, \dots, d_{n-3}\} \mt (u', u''),
\end{array}
\]
where
\[
\begin{array}{rl}
u'&\!\!\!\! =-\!\frac{(-1)^{n-3}}{(n-3)!}\Big(b \otimes c\otimes {\bf c}(aI_{n-3}, D_{1, n-3}, \dots ,D_{n-3, n-3}) \\
&\hspace{1.2cm}+a \otimes c \otimes {\bf c}(bI_{n-3}, D_{1, n-3}, \dots, D_{n-3, n-3})\\
& \hspace{1.2cm} +a \otimes b \otimes {\bf c}\Big(cI_{n-3}, D_{1, n-3}, \dots, D_{n-3, n-3})\Big),\\
u'' &\!\!\!\!= b\otimes c\otimes  \{a, d_1, \dots, d_{n-3}\}+a\otimes c \otimes \{b, d_1, \dots, d_{n-3}\}\\
&\ +a\otimes b\otimes \{c, d_1, \dots, d_{n-3}\}.
\end{array}
\]

On the other hand, for $W'=F_2^\times\otimes F_3^\times\otimes H_{n-2}(\GL_{n-3}(F),\kk)$ the summand
of $H_n(\fff^3\times \GL_{n-3}(F),\kk)$, we have
\[
\begin{array}{c}
d_{3,n}^1(n,\kk)|_{W'\oplus W_2'''}\colon\! W'\!\oplus\! W_2''' \arr U_{3,1}\! \oplus\! U_4'\oplus\! U_4'' \se H_n(\fff^2\!\times\! \GL_{n-2}(F),\kk), \\
 (-u', a\otimes b\otimes c \otimes \{d_1, \dots, d_{n-3}\} ) \mt(\frac{(-1)^{n-3}}{(n-3)!}z, 0,  u''),
\end{array}
\]
where
\[
\begin{array}{l}
z\!=\!                   c\otimes {\bf c}(\diag(I_{n-3}, b),\diag(aI_{n-3}, 1), D_{1, n-2}, \dots ,D_{n-3, n-2}) \\
\hspace{0.3cm}+b\otimes {\bf c}(\diag(I_{n-3}, c),\diag(aI_{n-3}, 1), D_{1, n-2}, \dots ,D_{n-3, n-2}) \\
\hspace{0.3cm}+c\otimes {\bf c}(\diag(I_{n-3}, a),\diag(bI_{n-3}, 1), D_{1, n-2}, \dots ,D_{n-3, n-2}) \\
\hspace{0.3cm}+a\otimes {\bf c}(\diag(I_{n-3}, c),\diag(bI_{n-3}, 1), D_{1, n-2}, \dots ,D_{n-3, n-2}) \\
\hspace{0.3cm}+b\otimes {\bf c}(\diag(I_{n-3}, a),\diag(cI_{n-3}, 1), D_{1, n-2}, \dots ,D_{n-3, n-2}) \\
\hspace{0.3cm}+a\otimes {\bf c}(\diag(I_{n-3}, b),\diag(cI_{n-3}, 1), D_{1, n-2}, \dots ,D_{n-3, n-2}).
\end{array}
\]
(Note that $D_{i, n-2}=\diag(D_{i, n-3}, 1)=\diag(D_{i, n-3}(d_i), 1)$.)
Now through the composite $W_2'''\arr U_4'\oplus U_4''\arr T_2'\oplus T_2''$ we have
\[
\begin{array}{l}
a\otimes b\otimes c \otimes \{d_1, \dots, d_{n-3}\} \mt (u', u'') \mt (-\frac{(-1)^{n-3}}{(n-3)!}z+u_1'''', t''')=0,
\end{array}
\]
where
\[
\begin{array}{l}
u_1'''=\!\frac{(-1)^{n-2}}{(n-2)!}\Big(c\otimes {\bf c}(bI_{n-2},A_{1,n-2}, D_{2, n-2}, \dots ,D_{n-2, n-2}) \\
 \hspace{2cm}+b\otimes {\bf c}(cI_{n-2},A_{1,n-2}, D_{2, n-2}, \dots ,D_{n-2, n-2}) \\
 \hspace{2cm}+c\otimes {\bf c}(aI_{n-2},B_{1,n-2}, D_{2, n-2}, \dots ,D_{n-2, n-2}) \\
 \hspace{2cm}+a\otimes {\bf c}(cI_{n-2},B_{1,n-2}, D_{2, n-2}, \dots ,D_{n-2, n-2}) \\
 \hspace{2cm}+ b\otimes {\bf c}(aI_{n-2},C_{1,n-2}, D_{2, n-2}, \dots ,D_{n-2, n-2})\\
 \hspace{2cm}+a\otimes {\bf c}(bI_{n-2},C_{1,n-2}, D_{2, n-2}, \dots ,D_{n-2, n-2})\! \Big).
\end{array}
\]
Thus $u_1'''=\frac{(-1)^{n-3}}{(n-3)!}z$ and we have
$\chi_n'(u'')=(-u_{1}''', u'')=(\frac{(-1)^{n-3}}{(n-3)!}z, u'')$ which is trivial in $E_{2,n}^2(n, \kk)$.
This induces a well-defined  surjective map $\B_n(F)_\kk \arr E_{2,n}^2(n, \kk)$, which we denote again by
$\chi_n'$. This completes the proof of the theorem.
\end{proof}

\begin{cor}\label{prp:surj}
For any positive integers $n$ and any field $\kk$ such that $(n-2)!\in \kk^\times$, there is a natural map
\[
\chi_n\colon \B_n(F)_\kk \arr H_{n+1}( \GL_{n}(F),\kk)/ H_{n+1}(\fff\times \GL_{n-1}(F),\kk).
\]
If Conjecture  $\ref{conj1}$ holds with $\kk$ coefficients, then $\chi_n$ is surjective. In particular, there is a surjective map
\[
\begin{array}{c}
\fff \otimes H_\GL^{n-1}(F, n)_\kk\oplus \bigwedge_\z^2\fff\otimes  K_{n-1}^M(F)_\kk \oplus \B_n(F)_\kk \two H_\GL^n(F, n+1)_\kk.
\end{array}
\]
\end{cor}
\begin{proof}
These follow from Theorems \ref{G-K-R} and \ref{thm:surj0} by replacing $\zn $ with the field $\kk$.
\end{proof}

Based on the above results we make the following conjecture. 

\begin{conj}\label{conj:chi}
For any positive integer $n$, there is  a natural surjective map
\[
\begin{array}{c}
\chi_{n}\colon  \B_{n}(F)\nn \larr \displaystyle\frac{H_{n+1}(\GL_{n}(F),\zn)}{H_{n+1}(\fff\times \GL_{n-1}(F), \zn)}\cdot
\end{array}
\]
\end{conj}

\begin{cor} \label{cor:B3-B4}
Conjecture $\ref{conj:chi}$ holds for $1\leq n\leq 4$. In particular, there are natural surjective maps
\[
\begin{array}{l}
\chi_3\colon \B_3(F) \arr H_4( \GL_3(F),\z)/ H_{4}(\fff\times \GL_2(F),\z),
\end{array}
\]
\[
\begin{array}{l}
\ \ \ \ \ \ \ \ \ \ \ \ \chi_4\colon \B_4(F)\half \arr H_5( \GL_4(F),\z\half)/ H_5(\fff\times \GL_3(F),\z\half).
\end{array}
\]
\end{cor}
\begin{proof}
We know that Conjecture \ref{conj1} holds for $n=3$ \cite[Section~3]{mirzaii-2008} and  $n=4$ 
\cite[Section~6]{mirzaii2008}. Therefore the desired results follow from Theorem~\ref{thm:surj0}.
\end{proof}

\begin{cor} \label{thm:surj34}
Let $\kk$ be a field. 
\par {\rm (i)} There is a surjective map
\[
\begin{array}{c}
\fff \otimes H_\GL^2(F, 3)_\kk\oplus \bigwedge_\z^2\fff\otimes  K_2^M(F)_\kk \oplus \B_3(F)_\kk \two H_\GL^3(F, 4)_\kk.
\end{array}
\]
\par {\rm (ii)} If $\char(\kk)\neq 2$, then  there is a surjective map
\[
\begin{array}{c}
\fff \otimes H_\GL^3(F, 4)_\kk\oplus \bigwedge_\z^2\fff\otimes  K_3^M(F)_\kk \oplus \B_4(F)_\kk \two H_\GL^4(F, 5)_\kk.
\end{array}
\]
\end{cor}
\begin{proof}
These follow from Corollary \ref{prp:surj} and Corollary \ref{cor:B3-B4} (by replacing the coefficients $\zn $ with $\kk$).
\end{proof}

\begin{rem}
The groups $\B_n(F)$ should be seen as a generalisation of  the Bloch group $B(F)$ to higher dimensions, 
which is suitable for the study of the homology of $\GL_n(F)$. There are other versions of $\B_n(F)$ 
studied by Goncharov \cite[p. 222]{goncharov1995}, \cite[\S 3]{cathelineau2007} and Yagunov 
\cite[\S~2]{yagunov2000}. Our version seems to be different. At the moment 
we do not know if there is any connection between our version and their version of $\B_n(F)$ for $n\geq 3$.
\end{rem}

\begin{rem}\label{rem:hh}
Let $n\geq 3$ and  $0\leq k\!\leq n-1$. 
We expect the $k$-th homology of Complex (\ref{seq1}) to be related to the groups 
\[
\ker(H_{n+k-1}(\GL_{n-1}(F),\z)\arr H_{n+k-1}(\GL_{n}(F),\z)),
\] 
\[
H_{n+k}(\GL_n(F),\z)/H_{n+k}(\fff\times \GL_{n-1}(F),\z).
\]
The study of this connection for $k=1$, is the main topic of the current paper.  If $k=0$, then  all the groups  
$H_{n}(\GL_n(F),\z)/H_{n}(\fff \times \GL_{n-1}(F),\z)$, $\ker(H_{n-1}(\GL_{n-1}(F),\z)\arr H_{n-1}(\GL_{n}(F),\z))$ and 
$\ker(\delta_1^{(n)})/\im(\delta_2^{(n)})$ are trivial.  The triviality of the first two groups follow from Theorem \ref{suslin}.
To prove that $\ker(\delta_1^{(n)})/\im(\delta_2^{(n)})$ is trivial,  first note that the map 
\begin{align*}
K_n^M(F) &\arr \fff \otimes K_{n-1}^M(F)/\im(\delta_{2}^{(n)}), \\ 
\{a_1, a_2, \dots,a_n\}& \mt  a_1\otimes\{ a_2, \dots,a_n\}\!\!\!\! \pmod {\delta_2^{(n)}}
\end{align*}
is well-defined. To prove this, it is sufficient to show that the trivial element $\{1-a, a, a_3, \dots, a_n\}$ maps to zero. We have
\begin{align*}
0=\{1-a, a, a_3, \dots, a_n\}& \mt (1-a)\otimes\{ a, a_3, \dots, a_n\}\!\!\!\! \pmod {\delta_2^{(n)}}\\
& =-(1-a)\otimes\{ a_3, a, \dots, a_n\}\!\!\!\! \pmod {\delta_2^{(n)}}\\
& =a_3\otimes\{ 1-a, a, \dots, a_n\}\!\!\!\! \pmod {\delta_2^{(n)}}=0.
\end{align*}
Now it can be checked directly that  the map
\[
\overline{\delta_1^{(n)}}\colon \fff \otimes K_{n-1}^M(F)/\im(\delta_{2}^{(n)})\arr K_n^M(F).
\]
is the inverse of the above map.

We do not know how to connect these groups for $2\leq k\leq n-1$.
\end{rem}

\section{Homology of  \texorpdfstring{$\GL_{n}(F)$}{Lg} over certain fields}\label{exa-Bn}

In this section we study the homology of $\GL_n$ over algebraically closed, real closed, global and local fields.
As we will see many of the above results can be improved over these fields.

\subsection{Fields with divisible multiplicative groups}

Let $F$ be algebraically closed. Since $K_m^M(F)$ is uniquely divisible for $m\geq 2$ \cite[Corollary 1.3]{bass-tate1973},
it follows that $\B_n(F)$ is uniquely divisible for $n\geq 3$. Moreover, since the Bloch group $B(F)$ is uniquely divisible 
\cite[Corollary~5.7]{suslin1991} and $\B_2(F)=B(F)$, $\B_2(F)$ is uniquely divisible too. Therefore $\B_n(F)$ is uniquely 
divisible for any $n$. Using this fact, we can improve almost all the results obtained in the previous sections.
For example Theorem \ref{thm:phi}, Corollary \ref{cor:n=4}, Theorem \ref{thm:surj0}
and Corollary \ref{cor:B3-B4} hold with integral coefficients.

But using the next theorem and Theorem \ref{G-K-R} of Galatius, Kupers and Randal-Williams, we can prove far better results.

\begin{thm}[Galatius, Kupers, Randal-Williams]\label{g-k-r}
Let $p$ be a prime such that $\fff\otimes \z/p=0$. Then $H_d(\GL_n(F), \GL_{n-1}(F),\z/p)=0$ in degrees $d<3n/2$.
\end{thm}
\begin{proof}
See \cite[Theorem 9.11]{G-K-R2020}.
\end{proof}

\begin{prp}\label{prp:ac}
Let $p$ be a prime such that $\fff\otimes \z/p=0$. Then the relative homology group $H_{n+k}(\GL_{n}(F),\GL_{n-1}(F),\z)$ and
the kernel of the natural map 
$H_{n+k-1}(\GL_{n-1}(F),\z) \arr H_{n+k-1}(\GL_{n}(F),\z)$ are uniquely $p$-divisible for $n> 2(k+1)$.
\end{prp}
\begin{proof}
By  Theorem \ref{g-k-r} we have $H_d(\GL_n(F), \GL_{n-1}(F),\z/p)=0$ in degrees $d<3n/2$. This in particular implies that 
\[
H_{n+k}(\GL_n(F), \GL_{n-1}(F),\z/p)=0
\]
for  $n>2k$ and 
\[
H_{n+k+1}(\GL_n(F), \GL_{n-1}(F),\z/p)=0
\]
for  $n> 2(k+1)$.
From the long exact sequence induced by the exact sequence of coefficients 
\[
0\arr \z \overset{p}{\arr} \z \arr \z/p\arr 0
\]
it follows that multiplication by $p$ on $H_{n+k}(\GL_n(F), \GL_{n-1}(F),\z)$ is a bijection for  $n> 2(k+1)$. Thus 
$H_{n+k}(\GL_n(F), \GL_{n-1}(F),\z)$ is uniquely $p$-divisible. This implies 
$\ker(H_{n+k-1}(\GL_{n-1}(F),\z) \arr H_{n+k-1}(\GL_{n}(F),\z))$ has the same property.
\end{proof}

The following is Theorem C of the introduction.

\begin{thm}\label{thm:div}
Let $F$ be a field such that $\fff$ is divisible. Then 
\par {\rm (i)} $H_{n}(\GL_{n-1}(F),\z) \arr H_{n}(\GL_{n}(F),\z)$ is injective for any $n\geq 1$,
\par {\rm (ii)} $H_\GL^n(F, n+1)$ is divisible for $n\neq 2$ and is uniquely divisible for $n\geq 5$.
\end{thm}
\begin{proof}
Since $\fff$ is divisible,  $\fff^{\otimes k}$ is uniquely divisible for $k\geq 2$ (see the proof of 
\cite[Proposition 1.2]{bass-tate1973}). Thus $K_m^M(F) $ is divisible for $m\geq 1$ and
$\fff^{\otimes k}\otimes K_m^M(F)$ is uniquely divisible
for $k, m\geq 1$. This implies that $\B_m(F)$ is uniquely divisible for any $m\geq 3$.
\par (i) The claim is trivial for $n=1, 2$. By Corollary~\ref{cor:n=4}, $\kappa_3$  is surjective 
with $2$-torsion image. Since $\B_3(F)$ is uniquely divisible, $\kappa_3$ must be the trivial map. Thus we 
get the injectivity for $n=3$. Now let $n=4$. By Theorem~\ref{thm:phi}, there is a natural map
\[
\varphi_4\colon \B_4(F)\arr \ker(H_{4}(\GL_{3}(F),\z)\arr H_{4}(\GL_{4}(F),\z)),
\]
with $3$-torsion image.
Since $\fff$ is divisible, for any $a\in \fff$, the polynomial $X^2-a\in F[X]$ splits into linear factors. Thus by 
\cite[Proposition 1.2]{bass-tate1973}, $K_m^M(F)$ is uniquely $2$-divisible for any $m\geq 2$. Thus we have the decomposition
\[
H_{3}(\GL_{3}(F),\z)\simeq H_{3}(\GL_{2}(F),\z)\oplus K_3^M(F),
\] 
where the splitting map $K_3^M(F)\arr H_{3}(\GL_{3}(F),\z)$ is given by $\{a,b,c\}\mt [a^{1/2},b,c]$. Now  as the proof of 
Theorem~\ref{thm:phi}(ii), using \cite[Theorem 2]{mirzaii2008}, we can show that $\varphi_4$ is surjective. Since $\B_4(F)$ is uniquely 
divisible, $\varphi_4$ is the trivial map. This implies the injectivity for $n=4$.

So let $n\geq 5$. Let $p$ be a prime. Since $\fff$ is divisible, $\fff\otimes \z/p=0$.
By Proposition \ref{prp:ac} (for k=1), the kernel of 
\[
H_n(\GL_{n-1}(F),\z) \arr  H_n(\GL_n(F),\z)
\]
is uniquely $p$-divisible. This holds for any prime and thus this kernel is uniquely divisible. 
On the other hand  by Theorem \ref{G-K-R} this kernel is torsion. Therefore it must be trivial.

\par (ii) If $n\geq 5$, then by Proposition \ref{prp:ac}, $H_{n+1}(\GL_n(F),\GL_{n-1}(F),\z)$ is 
uniquely $p$-divisible for any prime $p$. This implies that it is uniquely divisible. By (i), 
\[
\begin{array}{c}
H_\GL^n(F, n+1)\simeq H_{n+1}(\GL_n(F),\GL_{n-1}(F),\z).
\end{array}
\]
Therefore $H_\GL^n(F, n+1)$ is uniquely divisible.

If $n=1$, then $H_\GL^1(F, 2)\simeq \bigwedge_\z^2\fff$, so it is uniquely divisible. For $n=3$, consider the exact sequence 
\begin{equation*}
\fff \otimes H_\GL^{2}(F, 3)
\oplus \bigwedge{}_\z^2\fff\otimes K_{2}^M(F)\oplus \tors(\fff, K_{2}^M(F))
\end{equation*}
\[
\arr H_\GL^{3}(F, 4) \arr \frac{H_{4}(\GL_3(F),\z)}{H_{4}(\fff\times \GL_{2}(F),\z)}\arr 0.
\]
By Corollary \ref{cor:B3-B4}, the map
\[
\chi_3\colon \B_3(F) \arr \frac{H_{4}(\GL_3(F),\z)}{H_{4}(\fff\times \GL_{2}(F),\z)}
\]
is surjective. Since $\B_3(F)$ is uniquely divisible, $H_{4}(\GL_3(F),\z)/H_{4}(\fff\times \GL_{2}(F),\z)$ is divisible.
Now it follows from the above exact sequence that $H_\GL^{3}(F, 4) $ is divisible. (Note that $\fff \otimes H_\GL^{2}(F, 3)$,
$\bigwedge{}_\z^2\fff\otimes K_{2}^M(F)$ and  $\tors(\fff, K_{2}^M(F))$ are divisible.)

In a similar way we can show that $H_\GL^{4}(F, 5) $ is divisible. Here we need to show that we have a surjective map 
$\B_4(F) \arr \displaystyle\frac{H_{5}(\GL_4(F),\z)}{H_{5}(\fff\times \GL_{3}(F),\z)}$. But this can be proved as the proof of
Theorem~\ref{thm:surj0}, using the validity of Conjecture \ref{conj1} for $n=4$ (\cite[Section 6]{mirzaii2008}) and 
the decomposition $H_{3}(\GL_{3}(F),\z)\simeq H_{3}(\GL_{2}(F),\z)\oplus K_3^M(F)$.
This completes the proof of the theorem.
\end{proof}

\begin{rem}
The multiplicative group of an algebraically closed field is divisible. But the 
class of fields such that their multiplicative groups are divisible is much larger. For more on this see 
\cite[Theorems 1,  2]{oman2016}.
\end{rem}

\begin{cor}\label{thm:ac}
Let $F$ be algebraically closed (or more generally  a field such that $\fff\otimes \z/p=0$ and 
the polynomial $X^p-1\in F[X]$ splits into linear factors, for any prime $p$). Then for any $n$,
\par {\rm (i)} $H_{n}(\GL_{n-1}(F),\z) \arr H_{n}(\GL_{n}(F),\z)$ is injective,
\par {\rm (ii)} $H_{n}(\GL_{n}(F),\z)\simeq H_{n}(\GL_{n-1}(F),\z)\oplus K_n^M(F)$,
where the splitting map $K_n^M(F)\arr H_{n}(\GL_{n}(F),\z)$ is given by
\[
\{a_1,\dots,a_n\} \mt[a_1^{\frac{(-1)^{n-1}}{(n-1)!}},\dots,a_n]=[a_1,a_2^{-\frac{1}{1}},a_3^{-\frac{1}{2}}, \dots,a_n^{-\frac{1}{n-1}}].
\]
\par {\rm (iii)} $H_\GL^n(F, n+1)$ is divisible for $n\geq 1$ and is uniquely divisible for $n\neq 2$.
\par {\rm (iv)} There is a natural map
\[
\chi_n\colon \B_n(F) \arr H_{n+1}( \GL_{n}(F),\z)/ H_{n+1}(\fff\times \GL_{n-1}(F),\z).
\]
Moreover, if Conjecture  $\ref{conj1}$ holds with $\z$-coefficients over $F$, then $\chi_n$ is surjective.
\end{cor}
\begin{proof}
(i) This follows from Theorem \ref{thm:div}(i).

\par  (ii) This follows from (i) and the fact that $K_n^M(F)$ is uniquely divisible for $n\geq 2$ 
\cite[Proposition 1.2, Corollary 1.3]{bass-tate1973}.
\par (iii) The claim is trivial for $n=1$. Let $n=2$.  Since $K_i^M(F)$ is uniquely divisible for $i\geq 2$, 
$\fff\otimes K_2^M(F)$ is uniquely divisible and $K_3^M(F)/2=0$. Thus by Proposition \ref{mirzaii}, we have the exact sequence 
\[
H_3(\SL_2(F),\z)\arr H_\GL^2(F,3)\arr \fff\otimes K_2^M(F)\arr 0.
\]
By \cite[Theorem 6.1(ii)]{mirzaii-2008}, the natural map 
\[
H_3(\SL_2(F),\z)=H_3(\SL_2(F),\z)_\fff\arr H_3(\SL(F),\z)
\]
is injective. Now it follows from the commutative diagram
\[
\begin{tikzcd}
H_3(\SL_2(F),\z) \ar[r]\ar[d]&  H_3(\SL_3(F),\z)\ar[d] \ar[r]& H_3(\SL(F),\z)\ar[d, hook] \\
H_3(\GL_2(F),\z)\ar[r, hook]& H_3(\GL_3(F),\z) \ar[r, "\simeq"] & H_3(\GL(F),\z)
\end{tikzcd}
\]
that the map $H_3(\SL_2(F),\z)\arr H_3(\GL_2(F),\z)$ is injective. 
This implies that the map $H_3(\SL_2(F),\z) \arr H_\GL^2(F,3)$ is injective.
Since 
\[
H_3(\SL_2(F),\z)\simeq K_3^\ind(F)
\]
\cite[Proposition 6.4]{mirzaii-2008}, similar to  the proof of Proposition \ref{mirzaii}, we can show that 
\[
H_\GL^2(F,3)\simeq K_3^\ind (F)\oplus \fff\otimes K_2^M(F).
\]
But $K_3^\ind (F)$ is divisible (see the proof of \cite[Corollary 9.13]{G-K-R2020}). Therefore $H_\GL^2(F,3)$ is divisible. 
Now let $n= 3$. Then $\fff \otimes H_\GL^{2}(F, 3)$, $\bigwedge{}_\z^2\fff\otimes K_{2}^M(F)$ are uniquely divisible 
(see the proof of \cite[Proposition 1.2]{bass-tate1973})  and  $\tors(\fff, K_{2}^M(F))=0$.
Now as the proof of Theorem \ref{thm:div}(ii) we can show that $H_\GL^{3}(F, 4)$ is uniquely divisible. In a similar
way one can show that  $H_\GL^{4}(F, 5)$ is uniquely divisible. This resolves the case $n=4$.
If $n\geq 5$, the claim follows from Theorem \ref{thm:div}(ii).
\par (iv) We may assume $n\geq 3$.  The proof is similar to the proof of Theorem~\ref{thm:surj0}. Here we should use
(ii). In fact $\chi_n=d_{2,n}^2(n,\z)\circ\chi_n'$, where $\chi_n'\colon \B_n(F) \two E_{2,n}^2(n,\z)$ is surjective.
\end{proof}

\begin{rem}
(i) Let $F$ be a field such that $\fff$ is divisible. Since $\fff=(\fff^2)$ and
$\fff\otimes K_2^M(F)$ is uniquely divisible (see the proof of \cite[Proposition~1.2]{bass-tate1973}),
as in the proof of Proposition \ref{mirzaii}, we can show that
\[
H_\GL^2(F,3)\simeq K_3^\ind (F)\oplus \fff\otimes K_2^M(F).
\]
If $\mu(F)\neq 1$ (e.g. $\char(F)\neq 2$), Suslin's Bloch-Wigner exact sequence
\[
0\arr \tors(\mu(F),\mu(F))^\sim \arr K_3^\ind(F)\arr B(F)\arr 0
\]
(see \cite[Theorem 5.2]{suslin1991}) shows that $K_3^\ind(F)$ has nontrivial torsion elements.
Therefore $H_\GL^2(F,3)$ is not uniquely divisible. 

We do not know if $H_\GL^2(F,3)$ is divisible, when $\fff$ is divisible.
By  Corollary \ref{thm:ac} this is the case if $F$ is algebraically closed.
\par (ii) The part (iii) of Corollary \ref{thm:ac} gives a positive answer to a question asked by the author in \cite[page 616]{mirzaii2007}.
\end{rem}

\subsection{Real closed fields}

Let $F$ be a real closed field \cite[Chapter XI]{lang2002}. It is well-known that $F$ has a unique order and $\fff\simeq \{\pm 1\}\times F^{>0}$. 
Any polynomial of odd degree has a root. Moreover, for any $a\in F^{>0}$, $X^2-a$ has a root in $F^{>0}$, i.e. $\sqrt{a}\in F^{>0}$. Thus 
$F^{>0}$ is a uniquely divisible group.  The following result is well-known.

\begin{lem}\label{milnor}
Let $F$ be a real closed field. Then for any $m\geq 1$, $K_m^M(F)$ is direct sum of a cyclic group of order  $2$ generated by $\{-1, \dots, -1\}$ 
and a divisible subgroup $K_m^M(F)^\circ$ generated by all symbols $\{a_1,\dots,a_m\}$, $a_1,\dots,a_m\in F^{>0}$. 
\end{lem}
\begin{proof}
For proof of the case $F=\R$ see \cite[Theorem 14.46, Corollary 14.47]{magurn2002} or \cite[Example 1.6, Theorem 1.4]{milnor1970}.
The proof of the general case is similar.
\end{proof}

It follows from the above lemma that ${F^\times}^{\otimes i} \otimes K_m^M(F)$ decomposes as a direct sum of the subgroup of order two, generated by 
$(-1)\otimes\cdots\otimes  (-1)\otimes \{-1, \dots, -1\}$ and the uniquely divisible subgroup ${(F^{>0})}^{\otimes i} \otimes K_m^M(F)^\circ$ 
for $i, m\geq 1$ (see the proof of \cite[Proposition 1.2]{bass-tate1973}). Since
\[
\delta_3^{(n)}\Big((-1)\otimes (-1)\otimes (-1)\otimes \{-1, \dots,-1\}\Big)\!=\!(-1)\otimes (-1)\otimes \{-1, \dots,-1 \},
\]
$\B_n(F)$ is uniquely divisible for any $n\geq 3$.  

Observe that $\B_2(\R)$ is divisible. This follows from the fact that $K_3(\R)$ is divisible \cite[Theorem 4.9]{suslin1984}, since $\B_2(\R)$ 
is a quotient of  $K_3^\ind(\R)$.

The following is Theorem D of the introduction.

\begin{thm}\label{real}
Let  $F$ be a real closed field. Then
\par {\rm (i)}  $H_3(\GL_2(F),\z) \arr  H_3(\GL_3(F),\z)$ is injective,
\par {\rm (ii)}  $H_n(\GL_{n-1}(F),\z\half) \arr  H_n(\GL_n(F),\z\half)$ is injective for any $n$,
\par {\rm (iii)} $H_\GL^n(F, n+1)\half$  is uniquely divisible for $n\geq 5$.

\end{thm}
\begin{proof}
(i) By Corollary \ref{cor:n=4}, $\kappa_3$ is surjective with $2$-torsion image. Since $\B_3(F)$ is
divisible, $\kappa_3$ must be the trivial map. Thus the desired map is injective.
\par (ii) The claim is trivial for $n=1,2$. The case $n=3$ follows from (i). The case $n=4$ can be proved as the part (i).
So we may assume that $n\geq 5$. Let $p$ be an odd prime. Then $\fff\otimes \z/p=0$.
By Proposition \ref{prp:ac}, the kernel of $H_n(\GL_{n-1}(F),\z) \arr  H_n(\GL_n(F),\z)$ is uniquely $p$-divisible.
Since this holds for any odd prime, the kernel of the above map is $2$-torsion. From this we obtain the desired
injectivity.
\par (iii) The proof is similar to the proof of Theorem \ref{thm:div}(ii).
\end{proof}

\begin{cor}
Let $n\geq 1$. Then
\par {\rm (i)}  $H_n(\GL_{n-1}(\R),\z\half) \arr  H_n(\GL_n(\R),\z\half)$ is injective,
\par {\rm (ii)}   $H_n(\GL_n(\R),\z\half)\simeq H_n(\GL_{n-1}(\R),\z\half)\oplus K_n^M(\R)^\circ$,
\par {\rm (iii)} $H_\GL^n(\R, n+1)\half$ is divisible and is uniquely divisible for any $n\neq 2$,
\par {\rm (iv)} there is a natural  map 
\[
\begin{array}{c}
\chi_n\colon B_n(\R) \arr H_{n+1}(\GL_n(\R),\z\half)/H_{n+1}(\R^\times \times \GL_{n-1}(\R),\z\half).
\end{array}
\]
Moreover, if Conjecture  $\ref{conj1}$ holds with $\z\half$ coefficients for $\R$, then $\chi_n$ is surjective.
\end{cor}
\begin{proof}
(i) This is an special case of Theorem \ref{real}(ii).
\par (ii) This follows from (i), Theorem \ref{suslin}, the fact that $K_n^M(\R)^\circ$ is uniquely divisible  
\cite[Example 7.2(c), Chap. III]{weibel2013} and the isomorphism $K_n^M(\R)^\circ\simeq K_n^M(\R)\half$.
\par (iii) Since $H_\GL^1(\R, 2)\simeq \bigwedge_\z^2\R^{>0}$, the claim is trivial for $n=1$. By \cite[Theorem 4.9]{suslin1984},
$K_3(\R)$ is divisible. Thus by Lemma \ref{milnor} and Proposition \ref{mirzaii}, $H_\GL^2(\R, 3)\half$ is divisible.
The cases $n=3, 4$ can be proved similar to the cases $n=3, 4$ done in the proof of Corollary \ref{thm:ac}(iii). 
If $n\geq 5$, the claim follows from Theorem \ref{real}(iii).
\par (iiii) We may assume $n\geq 3$. The proof is similar to the proof of Theorem~\ref{thm:surj0}. In fact 
$\chi_n=d_{2,n}^2(n,\z\half)\circ\chi_n'$, where $\chi_n'\colon \B_n(\R) \two E_{2,n}^2(n,\z\half)$ is surjective.
\end{proof}

\subsection{Global and local fields}

The homology of general linear groups over global fields is well studied. For example in \cite[Corollary 7.6]{B-Y1994} Borel and 
Yang has shown that for a number field $F$, the natural map 
\[
H_{d}(\GL_{n-1}(F),\q) \arr H_{d}(\GL_{n}(F),\q)
\]
is injective for any $d$ and is surjective if $d\leq 2n-3$. Recently, Galatius, Kupers and Randal-Williams have proved the following 
result.

\begin{thm}[Galatius, Kupers and Randal-Williams]
Let $F$ be a field  with torsion  $K_2(F)$. Then $H_{d}(\GL_{n-1}(F),\q) \arr H_{d}(\GL_{n}(F),\q)$ is surjective
if $d< (4n-1)/3$ and is injective if $d<(4n-4)/3$.
\end{thm}
\begin{proof}
See \cite[Theorem E or Theorem 9.10]{G-K-R2020}.
\end{proof}

It is known that the second $K$-group of a global field is torsion \cite[p.~144, p.~158]{weibel2005}. Thus the above 
theorem can be applied to global fields. 

Let $F$ be a global field. Then by a theorem of Bass and Tate, for any $m\geq 3$,
\[
K_m^M(F)\simeq (\z/2)^{r_1},
\]
 where $r_1$ is the number of  embeddings of $F$ in $\R$  \cite[Theorem~2.1, Chap. II]{bass-tate1973}. Thus 
$\B_4(F)$ is torsion and for any $n\geq 5$, $\B_n(F)$ is 2-torsion. 

If $F$ is a local field, then $K_m^M(F)$ is uniquely divisible for any $m\geq 3$ \cite[Proposition 7.1, Chap. VI]{weibel2013}. 
In particular for any $n\geq 5$,  $\B_n(F)$ is uniquely divisible.

\begin{prp}\label{prp:a}
Let $F$ be either a global field or a local field and let the sequence
\[
\begin{array}{c}
H_n(\fff^2\times \GL_{n-2}(F),\z\sixth)\overset{{\alpha_1}_\ast-{\alpha_2}_\ast}{-\!\!\!-\!\!\!-\!\!\!\larr} H_n(\fff\times\GL_{n-1}(F),\z\sixth) \\
\overset{\inc_\ast}{\larr}H_n(\GL_{n}(F),\z\sixth) \arr  0
\end{array}
\]
be exact for any $3\leq n\leq s$. Then for any $1\leq n\leq s$,
\par {\rm (i)} $H_{n}(\GL_{n-1}(F),\z\sixth) \arr H_{n}(\GL_{n}(F),\z\sixth)$ is injective,
\par {\rm (ii)} $H_{n}(\GL_{n}(F),\z\sixth)\simeq H_{n}(\GL_{n-1}(F),\z\sixth)\oplus K_n^M(F)\sixth$. In case of local fields
the splitting map $K_n^M(F)\sixth\arr H_{n}(\GL_{n}(F),\z\sixth)$ is given by
\[
{(-1)^{n-1}}{(n-1)!}\{a_1,\dots,a_n\} \mt[a_1,\dots,a_n],
\]

\par {\rm (iii)} There is a natural map
\[
\begin{array}{c}
\chi_n\colon \B_n(F)\sixth \arr H_{n+1}( \GL_{n}(F),\z\sixth)/ H_{n+1}(\fff\times \GL_{n-1}(F),\z\sixth),
\end{array}
\]
which is surjective if Conjecture $\ref{conj1}$ holds with $\z\sixth$ coefficients.
\end{prp}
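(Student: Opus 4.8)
The plan is to transcribe, with $\z\sixth$ in place of $\z$, the inductive proof of Proposition~\ref{prp:ac}; the only new ingredient is the divisibility behaviour of the Milnor $K$-theory of a local field. By Sivitskii's theorem $K_m^M(F)$ is uniquely divisible for every $m\ge 3$, so once $n\ge 5$ every Milnor $K$-group occurring in the relevant part of the complex $(\ref{seq1})$ has index $\ge 3$: in particular $\fff\otimes K_{n-1}^M(F)$ is uniquely divisible, and since $\delta_2^{(n)}$ and $\delta_3^{(n)}$ are then homomorphisms between $\q$-vector spaces, the group $B_n(F)=\ker(\delta_2^{(n)})/\im(\delta_3^{(n)})$ is again a $\q$-vector space. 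Consequently $B_n(F)\sixth=B_n(F)$, the maps $\iota_m$ are injective on $K_m^M(F)$ for $m\ge 3$ (being left inverse up to the nonzero scalar $(-1)^{m-1}(m-1)!$, by Proposition~\ref{hur}), and the factors $\tfrac1{(n-2)!}$ appearing in the formulas for $\varphi_n$ and $\chi_n$ are meaningful on $B_n(F)$ and on $\im(\iota_{n-1})$ even though $(n-2)!$ need not be a unit of $\z\sixth$. I would induct on $n$, proving (i), (ii), (iii) simultaneously and assuming them for all $3\le m<n$.

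For the base cases $n=3$ and $n=4$: the assumed exactness is exactly the hypothesis of Proposition~\ref{prp:n-2}, whose proof gives (i) and (ii) with coefficients $\z\sixth$ — note that $2$ is a unit of $\z\sixth$, so $\kappa_n$ is defined, and its image, being $(n-1)$-torsion with $n-1\in\{2,3\}$ invertible in $\z\sixth$, vanishes, forcing injectivity; part (iii) is the base change along the flat map $\z\to\z\sixth$ of the maps $\chi_3,\chi_4$ from Section~\ref{chi} and Theorem~\ref{B3-B4}. (Equivalently one cites Theorem~\ref{thm:n=4} directly.)

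Now fix $n\ge 5$. For (i), take $t_1$ in the kernel of $\inc_\ast\colon H_n(\GL_{n-1}(F),\sixth)\to H_n(\GL_n(F),\sixth)$; by the assumed exactness lift $(t_1,0,0,0)$ along ${\alpha_1}_\ast-{\alpha_2}_\ast$, and reduce the lift to an element of $U_{3,1}\oplus U_4''$ exactly as in the proofs of Propositions~\ref{prp:n-2} and~\ref{prp:ac}: the reductions use only homology stability for $\GL$ and, at level $n-1$, the decomposition $H_{n-1}(\GL_{n-1}(F),\sixth)\simeq H_{n-1}(\GL_{n-2}(F),\sixth)\oplus K_{n-1}^M(F)$, which is the inductive hypothesis~(ii) and whose splitting $\tfrac{(-1)^{n-2}}{(n-2)!}\iota_{n-1}$ makes sense since $K_{n-1}^M(F)$ is uniquely divisible. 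Writing $u_4''=(-1)^{n-1}(n-1)!\sum a\otimes b\otimes\{c_1,\dots,c_{n-2}\}$, the vanishing of the component of $d_{2,n}^1(u)$ in $\fff\otimes K_{n-1}^M(F)$ reads $(n-1)!\sum\big(b\otimes\{a,c_1,\dots,c_{n-2}\}+a\otimes\{b,c_1,\dots,c_{n-2}\}\big)=0$; unique divisibility of $\fff\otimes K_{n-1}^M(F)$ lets us cancel $(n-1)!$, so $\sum a\otimes b\otimes\{c_1,\dots,c_{n-2}\}\in\ker(\delta_2^{(n)})$ represents a class $\xi\in B_n(F)$. The same computation as in Proposition~\ref{prp:ac} then gives $t_1=(n-1)\cdot W$ where $W=-\varphi_n(\xi)$; since $\im(\varphi_n)$ is $(n-1)$-torsion by Proposition~\ref{prp:phi}, $t_1=0$, which is the injectivity in (i). Part (ii) follows from (i), Theorem~\ref{suslin} and Proposition~\ref{hur}, the section $K_n^M(F)\to H_n(\GL_n(F),\sixth)$ being $\tfrac{(-1)^{n-1}}{(n-1)!}\iota_n$ (defined because $K_n^M(F)$ is uniquely divisible). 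For (iii) I would repeat the construction of Proposition~\ref{prp:surj}: build a surjection $\chi_n'\colon B_n(F)\two E_{2,n}^2(n,\z\sixth)$ by the same formulas (the $\tfrac1{(n-2)!}$ being harmless on the $\q$-vector space $B_n(F)$), verify well-definedness on $B_n(F)$ just as there, and set $\chi_n:=d_{2,n}^2(n,\z\sixth)\circ\chi_n'$; if Conjecture~\ref{conj1} holds then $d_{2,n}^2(n,\z)$, hence $d_{2,n}^2(n,\z\sixth)$, is surjective, so $\chi_n$ is surjective.

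The only point requiring care — the main obstacle — is the divisibility bookkeeping for $n\ge 5$: one must check that every group that was uniquely divisible in the algebraically closed argument (above all $B_n(F)$, $\fff\otimes K_{n-1}^M(F)$, and the images of the $\iota_m$) remains so using \emph{only} that $K_m^M(F)$ is uniquely divisible for $m\ge 3$ — recall that $K_1^M(F)=\fff$ and $K_2^M(F)$ of a local field are \emph{not} uniquely divisible — and that this is precisely what makes the denominators $\tfrac1{(n-2)!}$ in $\varphi_n$ and $\chi_n$, and the inductive splittings, legitimate over $\z\sixth$. With that in hand everything else is a verbatim transcription of Propositions~\ref{prp:surj}, \ref{prp:n-2} and~\ref{prp:ac}.
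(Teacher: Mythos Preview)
Your proposal is correct and takes exactly the paper's approach: the paper's own proof is a two-line reference to the inductive scheme of Proposition~\ref{prp:ac}, with the base cases $n=3,4$ supplied by Theorems~\ref{thm:n=4} and~\ref{B3-B4}, which is precisely what you do. One small slip worth noting: for $n=5$ the source $(\fff)^{\otimes 3}\otimes K_2^M(F)$ of $\delta_3^{(5)}$ is \emph{not} a $\q$-vector space (Sivitskii's theorem says nothing about $K_2^M$), so your sentence ``$\delta_3^{(n)}$ is a homomorphism between $\q$-vector spaces'' is false there; but since the \emph{target} $(\fff)^{\otimes 2}\otimes K_3^M(F)$ is a $\q$-vector space, any $\z$-linear map into it factors through $-\otimes\q$ and hence has $\q$-subspace image, so $B_5(F)$ is still uniquely divisible as you need.
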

\begin{proof}
The claim is trivial for $n=1,2$. The cases $n=3, 4$ follow from Corollaries \ref{cor:n=4} and  \ref{cor:B3-B4}. So let $n\geq 5$.
The proof is by induction and is similar to the proofs of Theorems~\ref{thm:phi} and \ref{thm:surj0}.
\end{proof}

\begin{rem}
In the previous proposition, we believe it is enough to invert $2$ (rather than $6$). For this we need to prove the injectivity of
\[
\begin{array}{c}
H_4(\GL_3(F), \z\half)\arr H_4(\GL_3(F), \z\half)
\end{array}
\]
(see Remark \ref{rem:k3}(ii)). So far we could prove the injectivity of the map $H_4(\GL_3(F), \z\sixth)\arr H_4(\GL_3(F), \z\sixth)$
(Corollary \ref{cor:n=4}), which explains why in the previous proposition we invert $6$. In the following we prove a special 
case, where we invert only $2$ in the coefficients ring.
\end{rem}

\begin{prp}
Let $F$ be a local field such that $3\nmid |\mu(F)|$. Then the natural map $H_4(\GL_3(F), \z\half)\arr H_4(\GL_4(F), \z\half)$
is injective. Moreover, $H_4(\GL_4(F), \z\half)\simeq H_4(\GL_3(F), \z\half)\oplus K_4^M(F)$.
\end{prp}
\begin{proof}
Let $m=|\mu(F)|$. 
By a theorem of Moore, $K_2^M(F)$ is the direct sum of a uniquely divisible abelian group and a finite cyclic group, isomorphic to
$\mu(F)$ \cite[Theorem 6.2.4, Chap. III]{weibel2013}. This would imply that $\fff\otimes \fff\otimes K_2^M(F)$ is the direct sum of a 
uniquely divisible abelian group and the $m$-torsion group $\fff\otimes \fff\otimes \z/m\simeq \fff/(\fff^m)\otimes \fff/(\fff)^m$ (observe that
$\fff/(\fff^m)$ is finite). Since $K_3^M(F)$ is uniquely divisible, $\fff\otimes K_3^M(F)$ also is uniquely divisible. Now from the definition of
$\B_4(F)$ we have
\[
\B_4(F)\simeq U \oplus A, 
\]
where $U$ is divisible and $A$ is $m$-torsion. Since $3\nmid m$ and since the image of the surjective map 
\[
\begin{array}{c}
\kappa_4:\B_4(F)\half \arr \ker(H_4(\GL_3(F), \z\half)\arr H_4(\GL_3(F), \z\half))
\end{array}
\]
(Corollary \ref{cor:n=4}) of Theorem \ref{thm:phi} is $3$-torsion, we see that $\kappa_4$ is the trivial map. This proves the injectivity result.
The other claim follows from Corollary~\ref{cor:dec} and the fact that $K_4^M(F)$ is uniquely divisible.
\end{proof}


\bigskip
\address{{\footnotesize

Instituto de Ci\^encias Matem\'aticas e de Computa\c{c}\~ao (ICMC),

Universidade de S\~ao Paulo (USP), S\~ao Carlos, S\~ao Paulo, Brasil

E-mail:\ bmirzaii@icmc.usp.br,
}}

\end{document}